\def\p{\partial}
\def\R{\mathbb R}
\def\sss{\mathbb S}
\newtheorem {theorem} {Theorem}
\newtheorem {proposition} [theorem]{Proposition}
\newtheorem {lemma}  [theorem]{Lemma}
\tikzset{node distance=3cm, auto}
\begin{document}

\title[The dynamics of the Ehrhard-M\"uller system]
{The dynamics of the Ehrhard-M\"uller system\\ with invariant algebraic surfaces}

\author[Jaume Llibre and Gabriel Rondón]{Jaume Llibre$^{1}$ and Gabriel Rondón$^{1}$}

\address{$^{1}$ Departament de Matemàtiques, Edifici Cc, Universitat Autònoma de Barcelona, 08193 Bellaterra, Barcelona, Catalonia, Spain}
\email{jaume.llibre@uab.cat}

\email{garv202020@gmail.com}

\subjclass[2020]{34C05.}

\keywords {Ehrhard-M\"uller system; Poincaré compactification; invariant algebraic surface.}

\begin{abstract}
In this paper we study the global dynamics of the Ehrhard-M\"uller differential system 
\[
\dot{x} = s(y - x), \quad \dot{y} = rx - xz - y + c, \quad \dot{z} = xy - z,
\]
where \(s\), \(r\) and \(c\) are real parameters, and \(x\), \(y\), and \(z\) are real variables. We classify the invariant algebraic surfaces of degree $2$ of this differential system. After we describe the phase portraits in the Poincaré ball of this differential system having one of this invariant algebraic surfaces. 

The Poincaré ball is the closed unit ball in $\R^3$ whose interior has been identified with $\R^3$, and his boundary, the $2$-dimensional sphere $\sss^2$, has been identified with the infinity of $\R^3$. Note that in the space $\R^3$ we can go to infinity in as many as directions as points has the sphere $\sss^2$. A polynomial differential system as the Ehrhard-M\"uller system  can be extended analytically to the Poincaré ball, in this way we can study its dynamics in a neigborhood of infinity. Providing these phase portraits in the Poincaré ball we are describing the dynamics of all orbits of the Ehrhard-M\"uller system having an invariant algebraic surface of degree $2$.
\end{abstract}

\maketitle

\section{Introduction and statement of the main results}

Consider the polynomial differential system
\begin{equation}\label{main_eq_1}
\dot{x}=s(y-x),\quad \dot{y}=rx-xz-y+c,\quad\dot{z}=xy-z,
\end{equation}
where  $r,s,c$ are real parameters, $(x,y,z)\in\mathbb{R}^3$ and the dot $\cdot$ denotes the derivative of the functions $x(t)$, $y(t)$ and $z(t)$ with respect to the real variable $t$, usually called the time. Furthermore, we will denote by
\begin{equation}\label{VF}
X= s(y-x)\dfrac{\p}{\p x}+ (rx-xz-y+c)\dfrac{\p}{\p y}+ (xy-z)\dfrac{\p}{\p z}
\end{equation}
the vector field associated to system \eqref{main_eq_1}. This differential system if $c = 0$ becomes the classical Lorenz system with $b=1$. In what follows we will focus our attention when $c$ is non-zero. 

The nonlinear dynamic properties of system \eqref{main_eq_1} have been extensively analyzed in various articles; see, for instance, \cite{EM, GWR1, GWR2, PLB, We, WGR}. Specifically, \cite{EM} established the stability and bifurcation behaviors of the Ehrhard-M\"uller system, consistent with both symmetric and non-symmetric heating experiments. Meanwhile,  \cite{PLB} investigated the periodicity of the system, nonlinear dynamics, and chaos, deriving a periodicity diagram. Additional studies \cite{GWR1, GWR2, We, WGR} explored the chaotic properties of system \eqref{main_eq_1} through numerical and experimental approaches.
However, the full range of dynamical behaviors exhibited by the Ehrhard--M\"uller system has not yet been fully characterized. This gap motivates our present investigation. 


Comparing the results on the Lorenz system with the ones of the Ehrhard-M\"uller system some natural questions arise: what are the invariant algebraic surfaces of degree $2$ of the Ehrhard-M\"uller system? What is the global dynamics in the Poincaré ball of the Ehrhard-M\"uller system having some of these invariant surfaces? The objective of this paper is to answer these two questions.

Let \( f=f(x,y,z) \) be a real polynomial. We recall that \(f(x,y,z)=0\) is an {\it invariant algebraic surface} of the differential system \eqref{main_eq_1} or of its vector field $X$ if 
\begin{equation}\label{IAC}
X f= kf,
\end{equation}
where \( k = k(x,y,z) \) is a real polynomial of degree at most 1, called the \textit{cofactor} of \( f \). This means that if an orbit of the differential system \eqref{main_eq_1} passes through a point on this surface, then the whole orbit is contained in the surface.

\begin{theorem}\label{prop_alg}
All the invariant algebraic surfaces $f(x, y, z)=0$ of degree $2$ of the Ehrhard-M\"uller system \eqref{main_eq_1} are given in the Table $1$.
\begin{table}[h]
\begin{center}
\begin{tabular}{| c ||c| c | c |}
\hline
Case & $s,r,c$ & $f(x,y,z)$ & $k$ \\
\hline\hline
$(a)$  & $s=1/2$ & $x^2-z$ & $-1$ \\
\hline
$(b)$  & $s=2,$ $r=0$  & $y^2 + z^2-cx$ & $-2$ \\
\hline
$(c)$ & $r=0,$ $c=0$  & $y^2 + z^2$ & $-2$ \\
\hline
$(d)$ & $s=1,$ $c=0$  & $y^2 + z^2-r x^2$ & $-2$ \\
\hline
\end{tabular}
\end{center}
\vspace{0.2cm} \caption{The invariant algebraic surfaces $f(x, y, z)=0$ of degree 2 of system \eqref{main_eq_1}.}\label{tb_2}
\end{table}
\end{theorem}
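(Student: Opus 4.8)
The plan is to apply the classical method of undetermined coefficients directly to the defining relation \eqref{IAC}. Write the general polynomial of degree at most $2$,
$$f = a_1 x^2 + a_2 y^2 + a_3 z^2 + a_4 xy + a_5 xz + a_6 yz + a_7 x + a_8 y + a_9 z + a_{10},$$
subject to $(a_1,\dots,a_6)\neq(0,\dots,0)$ so that $\deg f = 2$, together with the general cofactor $k = k_0 + k_1 x + k_2 y + k_3 z$. By \eqref{VF} the polynomial $Xf$ has degree $3$, its cubic part being exactly $-xz\,(\partial f/\partial y) + xy\,(\partial f/\partial z)$, and $kf$ likewise has degree $3$. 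Imposing $Xf = kf$ and equating the coefficients of the $20$ monomials of degree $\le 3$ produces a polynomial system in the unknowns $a_1,\dots,a_{10},k_0,\dots,k_3$, with $s,r,c$ as parameters; since the surface $f=0$ is unchanged when $f$ is multiplied by a nonzero constant, it is enough to describe all solutions up to this scaling.

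First I would handle the ten equations coming from the cubic monomials, which are independent of $s,r,c$: from $x^3,y^3,z^3$ one gets $k_1a_1=k_2a_2=k_3a_3=0$, and $x^2y,x^2z,xy^2,xz^2,y^2z,yz^2,xyz$ give seven further bilinear relations among $k_1,k_2,k_3$ and $a_1,\dots,a_6$. A short case analysis on which of $a_1,a_2,a_3$ (and, when all three vanish, which of $a_4,a_5,a_6$) is nonzero, repeatedly using that an equation of the form $a(1+k_i^2)=0$ over $\mathbb{R}$ forces $a=0$, shows that in every case compatible with $\deg f = 2$ one must have $k_1=k_2=k_3=0$, $a_4=a_5=a_6=0$ and $a_2=a_3$. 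Thus the cofactor reduces to a constant $k_0$ and $f = a_1x^2 + a_2(y^2+z^2) + a_7x + a_8y + a_9z + a_{10}$.

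It then remains to solve the equations coming from the monomials of degree $\le 2$, which with the above reductions are linear in $a_1,\dots,a_{10}$ for fixed $k_0,s,r,c$; comparing the coefficients of $x^2,y^2,z^2,xy,xz,x,y,z$ and $1$ yields
$$a_1(2s+k_0)=0,\quad a_2(2+k_0)=0,\quad 2a_1s+2a_2r+a_9=0,\quad a_8=0,$$
$$a_7(s+k_0)=0,\quad a_7s+2a_2c=0,\quad a_9(1+k_0)=0,\quad k_0a_{10}=0.$$
I would now run through the case tree determined by whether $a_1=0$, whether $a_2=0$, and by the parameter relations $s=1/2,\,s=1,\,s=2,\,r=0,\,c=0$ that these equations impose. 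For instance $a_1\neq0,\,a_2\neq0$ forces $k_0=-2$, $s=1$, $a_7=a_9=a_{10}=0$, $c=0$ and $a_1=-a_2r$, giving family $(d)$; $a_1\neq0,\,a_2=0$ forces $s=1/2$, $k_0=-1$ and $f$ proportional to $x^2-z$ (apart from the degenerate subcase $s=0$), giving family $(a)$; $a_1=0,\,a_2\neq0$ splits, according to whether $a_7=0$, into families $(b)$ (with $s=2$, $r=0$) and $(c)$ (with $r=c=0$); and $a_1=a_2=0$ contradicts $\deg f = 2$. Up to the scaling of $f$ and after discarding degenerate parameter choices, these are exactly the four entries of Table~\ref{tb_2}. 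I expect the main obstacle to be organizational rather than computational: ensuring the case tree is simultaneously exhaustive and non-redundant, and being careful to throw away the branches in which $f$ collapses to a constant or to a polynomial of lower degree. Each surviving branch is then settled by elementary linear algebra.
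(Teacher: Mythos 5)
Your proposal is correct and follows essentially the same route as the paper: undetermined coefficients in $Xf=kf$, first forcing the cofactor to be constant, the cross terms $xy,xz,yz$ to vanish and the $y^2,z^2$ coefficients to coincide, and then a short case analysis on the remaining linear equations that produces exactly the four families of Table~\ref{tb_2}. Deriving $k_1=k_2=k_3=0$ purely from the cubic-coefficient equations is a clean way of organizing what the paper dismisses as ``straightforward to verify,'' and your explicit flagging of the degenerate branch $s=0$ (where any $a_1x^2+a_7x+a_{10}$ is invariant with zero cofactor) is a point the paper's proof passes over silently.
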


A function of the form $I=I(x,y,z)e^{st}$ is a {\it Darboux invariant} for the differential system \eqref{main_eq_1} if $dI/dt=0$ on the solutions of this differntial system. In other words, a  Darboux invariant is a kind of first integral depending on the time. 

\begin{proposition}\label{p1}
The Ehrhard-M\"uller system \eqref{main_eq_1} has the Darboux invariant
\item[(a)] $(x^2-z)e^t$ if $s=1/2$;

\item[(b)] $(y^2 + z^2-cx)e^{2t}$ if $s=2$ and $r=0$;

\item[(c)] $(y^2 + z^2)e^{2t}$ if $r=c=0$;

\item[(d)] $(y^2 + z^2-r x^2)e^{2t}$ if $s=1$ and $c=0$.
\end{proposition}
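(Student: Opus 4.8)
The plan is to derive Proposition~\ref{p1} as a direct consequence of Theorem~\ref{prop_alg}, after unwinding the definition of a Darboux invariant. If $I = f(x,y,z)\,e^{\lambda t}$ for a polynomial $f$ and a constant $\lambda$, then along any solution of \eqref{main_eq_1} the chain rule gives
\[
\frac{dI}{dt} = \left(\dot f + \lambda f\right) e^{\lambda t} = \left(Xf + \lambda f\right) e^{\lambda t},
\]
where $Xf$ is the derivative of $f$ along the vector field \eqref{VF}. Since $e^{\lambda t}$ never vanishes, $I$ is a Darboux invariant if and only if $Xf = -\lambda f$, that is, if and only if $f=0$ is an invariant algebraic surface of $X$ whose cofactor is the \emph{constant} $k=-\lambda$.

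Next I would observe that all four cofactors listed in Table~\ref{tb_2} are constants: $k=-1$ in case $(a)$ and $k=-2$ in cases $(b)$, $(c)$, $(d)$. Hence Theorem~\ref{prop_alg} already furnishes, for each choice of parameters in that table, a degree-$2$ polynomial $f$ with $Xf = kf$ and $k$ constant. Setting $\lambda=-k$ in each case then produces exactly the functions in Proposition~\ref{p1}: from case $(a)$ with $s=1/2$ one gets $\lambda=1$ and the invariant $(x^2-z)e^{t}$; from cases $(b)$, $(c)$, $(d)$ one gets $\lambda=2$ and the invariants $(y^2+z^2-cx)e^{2t}$, $(y^2+z^2)e^{2t}$, and $(y^2+z^2-rx^2)e^{2t}$, respectively, under the stated parameter restrictions.

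There is essentially no obstacle here, because the substantive work — classifying the degree-$2$ invariant algebraic surfaces and computing their cofactors — is already contained in Theorem~\ref{prop_alg}; the only point deserving a line of care is the chain-rule identity above, together with the remark that the letter in the exponent of the generic Darboux invariant $I(x,y,z)e^{st}$ is merely a placeholder for the relevant constant exponent, which here equals minus the cofactor and so takes the values $1$ and $2$. If one prefers a self-contained verification, each of the four cases can alternatively be confirmed by a one-line direct substitution of $X$ into $Xf+\lambda f$, but this only re-proves the corresponding rows of Table~\ref{tb_2}.
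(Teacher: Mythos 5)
Your proposal is correct. The chain-rule identity $\frac{dI}{dt}=(Xf+\lambda f)e^{\lambda t}$ is right, and since $e^{\lambda t}$ never vanishes and solutions pass through every point, $I=f\,e^{\lambda t}$ is a Darboux invariant precisely when $Xf=-\lambda f$, i.e.\ when $f$ is a Darboux polynomial with \emph{constant} cofactor $k=-\lambda$; reading off $k=-1$ in case $(a)$ and $k=-2$ in cases $(b)$--$(d)$ of Table~\ref{tb_2} then yields exactly the four invariants of Proposition~\ref{p1}. Your route differs mildly from the paper's: the paper does not invoke Theorem~\ref{prop_alg} at all, but instead verifies statement $(a)$ by a direct substitution, computing $\frac{dI}{dt}=xe^t(y-x)-e^t(xy-z)+(x^2-z)e^t=0$ for $s=1/2$, and declares the remaining cases analogous. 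Your argument is the more structural one: it treats all four cases uniformly, makes explicit the general principle that an invariant algebraic surface with constant cofactor $k$ produces the Darboux invariant $f\,e^{-kt}$, and reuses the computational work already done in the proof of Theorem~\ref{prop_alg} instead of repeating it. What the paper's direct verification buys in exchange is self-containedness and an independent numerical check of one row of Table~\ref{tb_2}; your remark that the exponent letter in the paper's generic form $I(x,y,z)e^{st}$ is only a placeholder (and clashes notationally with the parameter $s$ of the system) is a fair and useful clarification.
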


\begin{figure}[h]
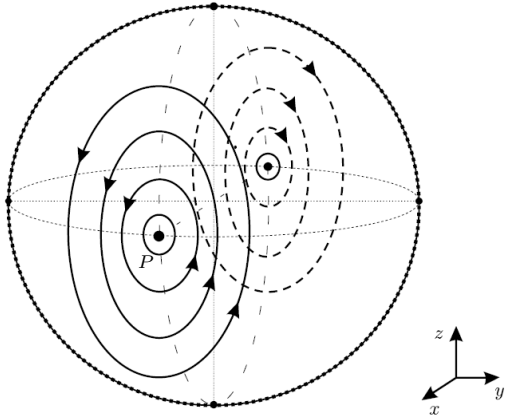

\begin{overpic}[scale=0.4]{infinity.png}
        \put(28,29){\scriptsize{ $P$}}
		\end{overpic}
\caption{\footnotesize{ Phase portrait of the Ehrhard-M\"uller system \eqref{main_eq_1} at the infinity sphere.
}}
\label{infinity}
\end{figure}

For a definition of the Poincaré ball see Subsubsection \ref{s212}.

\begin{proposition}\label{p2}
For all values of the parameters $r,s,c\in \mathbb{R}$ the phase portrait of the Ehrhard-M\"uller system \eqref{main_eq_1} on the sphere $\sss^2$ of the infinity is given in Figure \ref{infinity}. On this figure there are two centers at the endpoints of the $x$--axis and a circle of equilibria at the endpoints of the plane $x=0$.
\end{proposition}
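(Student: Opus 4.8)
The plan is to analyze system \eqref{main_eq_1} in a neighborhood of infinity through its Poincaré compactification $p(X)$ on the closed unit ball, whose boundary is the infinity sphere $\sss^2$. Since $X$ has degree $2$, I would first write down the classical expressions of $p(X)$ in the six local charts $U_1,V_1,U_2,V_2,U_3,V_3$, recalling that $\sss^2$, given by $z_3=0$ in each chart, is invariant. The crucial preliminary remark is that the restriction of $p(X)$ to $\{z_3=0\}$ only involves the homogeneous quadratic part
\[
X_2=-xz\,\frac{\p}{\p y}+xy\,\frac{\p}{\p z}
\]
of $X$, because every monomial carrying a parameter $r$, $s$ or $c$ belongs to the part of $X$ of degree at most $1$ and hence vanishes on $\sss^2$; this is precisely why the phase portrait at infinity does not depend on $r,s,c$. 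Moreover $\langle X_2(x,y,z),(x,y,z)\rangle=-xyz+xyz=0$, so $X_2$ is tangent to every sphere centered at the origin, whence the flow induced on $\sss^2$ is, up to a rescaling of time, the restriction of $X_2$ to $\sss^2$, and the first coordinate $\bar x$ is a first integral of that flow.

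Next I would carry out the chart computations. In $U_1$ the direction of the positive $x$--semiaxis is the origin of the chart, and on $\{z_3=0\}$ the compactified system becomes $\dot z_1=-z_2$, $\dot z_2=z_1$, whose origin is a center and all of whose orbits are circles. Hence the only infinite equilibrium seen in $U_1$ is its origin, it is a center, and the whole open hemisphere $\{\bar x>0\}$ of $\sss^2$ is foliated by periodic orbits, the circles $z_1^2+z_2^2=\mathrm{const}$, which shrink onto the center and expand towards the equator $\{\bar x=0\}$. By the symmetry between the charts $U_1$ and $V_1$ the hemisphere $\{\bar x<0\}$ is the antipodal copy; the two chart origins are the two endpoints of the $x$--axis, giving the two centers of the statement.

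I would then compute $p(X)$ restricted to $\{z_3=0\}$ in the charts $U_2$ and $U_3$, obtaining $\dot z_1=z_1^2z_2$, $\dot z_2=z_1(1+z_2^2)$ in $U_2$ and $\dot z_1=-z_1^2z_2$, $\dot z_2=-z_1(1+z_2^2)$ in $U_3$; in both cases the equilibrium set is exactly the line $\{z_1=0\}$. Together with the charts $V_2$ and $V_3$, this covers the whole equator $\sss^2\cap\{\bar x=0\}$ and shows that it consists entirely of equilibria, namely the endpoints of the plane $x=0$. Finally, since $\bar x$ is a first integral of $p(X)|_{\sss^2}$, any orbit other than the two centers and these equatorial equilibria lies on a latitude circle $\{\bar x=\mathrm{const}\}$ and is periodic; assembling the three pieces $\{\bar x>0\}$, $\{\bar x=0\}$ and $\{\bar x<0\}$ yields exactly Figure \ref{infinity}.

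The only genuinely delicate point is the circle of equilibria $\{\bar x=0\}$: it is not normally hyperbolic, since in the charts $U_2,V_2,U_3,V_3$ the linear part of $p(X)$ at each of its points is nilpotent, so its local phase portrait and the way the hemispheric periodic orbits accumulate onto it cannot be obtained from a hyperbolic normal form. I would settle this directly from the explicit chart systems above, or, more transparently, from the first integral $\bar x$, which also makes clear how the overlapping charts glue together. Everything else reduces to routine computations with the Poincaré compactification formulas.
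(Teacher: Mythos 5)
Your proposal is correct and follows essentially the same route as the paper: writing the Poincaré compactification of \eqref{main_eq_1} in the local charts and restricting to $z_3=0$, which gives exactly the linear center $\dot z_1=-z_2$, $\dot z_2=z_1$ at the origin of $U_1$ (and $V_1$) and the line of equilibria $z_1=0$ in $U_2,U_3$ (and $V_2,V_3$), i.e.\ the circle at the endpoints of the plane $x=0$. Your additional observation that the quadratic part $X_2=-xz\,\partial_y+xy\,\partial_z$ is parameter-free, tangent to the spheres centered at the origin, and admits $\bar x$ as a first integral of the induced flow on $\sss^2$ is a nice refinement (it makes the independence of $r,s,c$ and the foliation of the two open hemispheres by periodic latitude orbits completely transparent), but it does not change the underlying method.
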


In the next theorem we describe the phase portrait in the Poincar\'e ball of the Ehrhard-M\"uller system \eqref{main_eq_1} having the invariant algebraic surface $x^2-z=0$. For the usual definitions $\omega$-limit set and $\alpha$-limit set of an orbit of the differential system \eqref{main_eq_1} see Subsection \ref{Darboux}.

\begin{theorem}\label{ta}
The following statements hold for the Ehrhard-M\"uller system \eqref{main_eq_1} having the invariant algebraic surface $x^2-z=0$.
\begin{itemize}
\item[(i)] The phase portraits on the invariant algebraic surface $x^2-z=0$ are topological equivalent to one of the phase portraits of Figure \ref{F2}. 

\item[(ii)] The $\alpha$-limit of an orbit contained in the interior of the Poincaré ball and in the region $x^2-z<0$ is the infinite equilibrium point at the origin of the local chart $U_3$, while the $\alpha$-limit of an orbit contained in the interior of the Poincaré ball and in the region $x^2-z>0$ is the infinite equilibrium point at the origin of the local chart $V_3$.

\item[(iii)] The $\omega$-limit of an orbit contained in the interior of the Poincaré ball and outside of the invariant surface $x^2-z=0$ is either one of the stable finite equilibrium points contained in the surface $x^2-z=0$, or one of the two infinite equilibrium points at the endpoints of the $y$-axis contained in the surface $x^2-z=0$. 
\end{itemize}
\end{theorem}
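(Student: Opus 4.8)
The plan is to work throughout with $s=1/2$, the only case in which $x^2-z=0$ is invariant (Theorem \ref{prop_alg}), and to lean on the Darboux invariant $I=(x^2-z)e^{t}$ of Proposition \ref{p1}(a), which is constant along every solution; note in particular that since its cofactor is $-1<0$, any solution off the surface satisfies $x^2-z=I_0e^{-t}\to 0$ as $t\to+\infty$, so the surface $x^2-z=0$ attracts.

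For item (i) I would restrict the vector field to the invariant surface $z=x^2$, which gives the planar polynomial system $\dot x=\frac12(y-x)$, $\dot y=-x^3+rx-y+c$. Its finite singular points lie on the diagonal $y=x$ with $x$ a real root of the cubic $g(x):=x^3-(r-1)x-c$, hence there are one, two, or three of them; at such a point the Jacobian has trace $-3/2$ and determinant $\frac12\,g'(x_0)$, so the singularity is a saddle when $g'(x_0)<0$, an attracting node or focus when $g'(x_0)>0$, and in the degenerate cases $g'(x_0)=0$ it is semihyperbolic and classified by the standard saddle–node normal form. The divergence of the planar field is the constant $-3/2\neq 0$, so Bendixson's criterion excludes periodic orbits. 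To finish (i) I would compute the Poincaré compactification of this planar field, whose top-degree part is $-x^3\,\p/\p y$: this produces the two infinite singularities at the endpoints of the $y$-axis, whose local phase portrait is found by a blow-up or the usual chart computation; combining the finite data, the infinite data, and the absence of limit cycles yields the finite list of topologically inequivalent global phase portraits of Figure \ref{F2}. Because the surface attracts, each attracting singularity on it is also attracting in $\R^3$.

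For items (ii) and (iii) the lever is again $I=(x^2-z)e^{t}$. If an orbit in the open ball and off the surface were bounded as $t\to-\infty$, then $x^2-z$ would be bounded and $(x^2-z)e^{t}\to 0$, forcing $I\equiv 0$ and hence the orbit onto $x^2-z=0$, a contradiction; so such an orbit is unbounded in backward time and its $\alpha$-limit set lies on $\sss^2$. Writing the compactification of the full three-dimensional field with $s=1/2$ in the charts $U_3$ and $V_3$ and feeding in the exact relation $x^2-z=I_0e^{-t}$, with $I_0<0$ on $\{x^2-z<0\}$ and $I_0>0$ on $\{x^2-z>0\}$, pins the escape direction to $(0,0,1)$, respectively $(0,0,-1)$, that is, to the origin of $U_3$, respectively $V_3$; a local analysis at these points, which sit on the circle of infinite equilibria of Proposition \ref{p2}, confirms they are genuinely attained in backward time. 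Symmetrically, for $t\to+\infty$ one has $x^2-z=I_0e^{-t}\to 0$, so the $\omega$-limit set of an orbit off the surface is contained in $\{x^2-z=0\}\cup\sss^2$; by part (i) the only $\omega$-limits inside the surface are its attracting singular points, and on $\sss^2\cap\overline{\{x^2-z=0\}}$ the origin of $U_3$ is (by (ii), within the surface) a repeller, so what remains are the two endpoints of the $y$-axis lying in the surface, giving exactly the dichotomy of (iii).

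The main obstacle throughout is the local study of the non-hyperbolic infinite equilibria of Proposition \ref{p2} — the circle of equilibria and the two centers on the $x$-axis — and, in particular, proving that the origins of $U_3$ and $V_3$ truly capture (respectively expel) the orbits of the corresponding regions rather than neighbouring points of the circle; I expect this to require directional blow-ups and careful bookkeeping of the center-type directions, together with a continuity argument comparing the $\omega$-limits of orbits near the surface with the $\omega$-limits of the two-dimensional flow obtained in (i).
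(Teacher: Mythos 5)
Your plan follows the paper's proof essentially step for step: restrict to $z=x^2$ to get the planar system $\dot x=\tfrac12(y-x)$, $\dot y=rx-x^3-y+c$, classify the finite equilibria on the diagonal via the cubic (your sign condition on $g'(x_0)$ is equivalent to the paper's discriminant split $\Delta(c,r)=-27c^2+4(r-1)^3$), exclude periodic orbits by Bendixson since the divergence is $-3/2$, study the degenerate infinite equilibria at the endpoints of the $y$-axis by Poincar\'e compactification and blow-ups, and then derive (ii) and (iii) from the Darboux invariant $(x^2-z)e^{t}$ exactly as the paper does. The only part you leave schematic is the local analysis at the origin of the chart $U_2$ of the restricted system (the paper resolves this linearly zero point with a twist followed by two vertical blow-ups, obtaining two hyperbolic sectors), but this is a matter of carrying out the computation you already propose, not a different method.
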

\begin{figure}[h]
\begin{overpic}[scale=0.3]{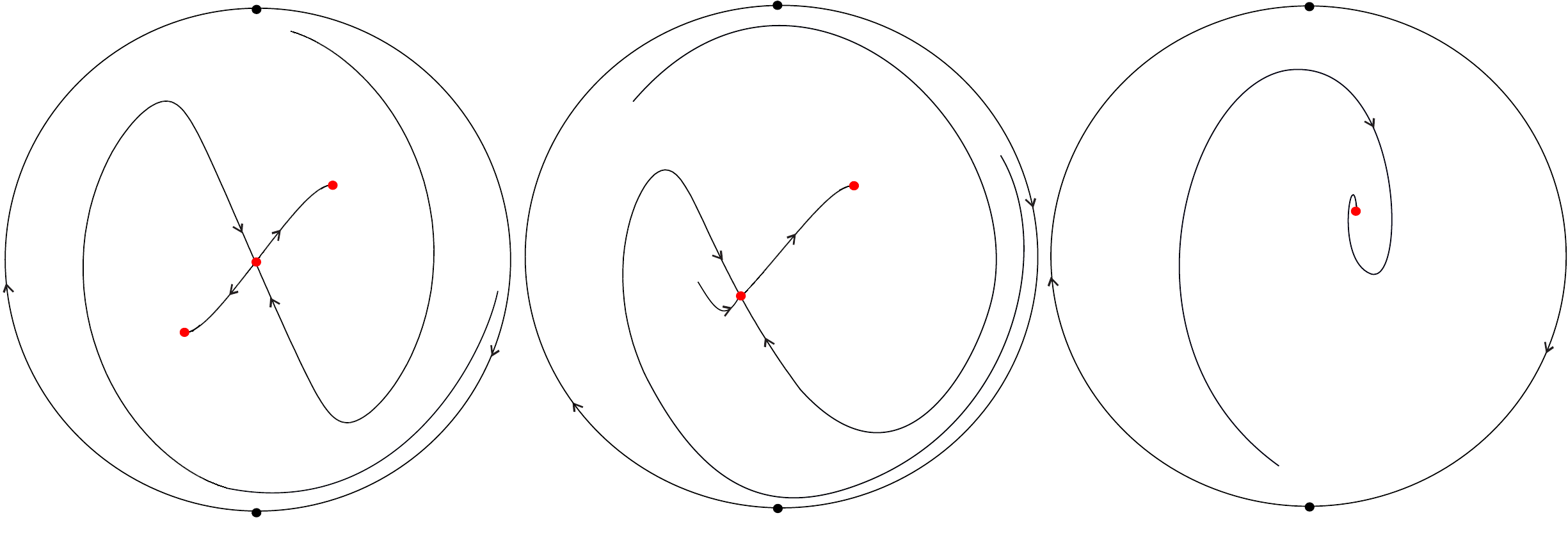}
        \put(14,0){$\Delta>0$}
        \put(49,0){$\Delta=0$}
        \put(84,0){$\Delta<0$}
		\end{overpic}
\caption{\footnotesize{ The topological phase portraits on the invariant algebraic surface $x^2-z=0$.
}}
\label{F2}
\end{figure}
In the following theorem we describe the phase portrait in the Poincar\'e ball of the Ehrhard-M\"uller system \eqref{main_eq_1} having the invariant algebraic surface $y^2 + z^2-cx=0$. 

\begin{theorem}\label{tb}
The following statements hold for the Ehrhard-M\"uller system \eqref{main_eq_1} having the invariant algebraic surface $y^2 + z^2-cx=0$.
\begin{itemize}
\item[(i)] The phase portraits on the invariant algebraic surface \( y^2 + z^2 - cx = 0 \) are topologically equivalent to the phase portrait in Figure \ref{fig_surface_2}. So all the orbits start at infinity and end in the stable focus $Q.$
	
\item[(ii)] The $\alpha$-limit of an orbit contained in the interior of the Poincaré ball and outside of $y^2 + z^2-cx=0$ is the infinity equilibrium point \( P \) at the origin of the chart \( U_1 \), and it is located at the endpoint of the positive \( x \)-axis.

\item[(iii)] The $\omega$-limit of an orbit contained in the interior of the Poincaré ball and outside of the invariant surface $y^2 + z^2-cx=0$ is the stable focus $Q$. 
\end{itemize}
\end{theorem}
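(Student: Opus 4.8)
The plan is to treat Case (b) of Table~1, so fix $s=2$ and $r=0$; note that the involution $(x,y,z,c)\mapsto(-x,-y,z,-c)$ leaves system \eqref{main_eq_1} invariant and exchanges the two endpoints of the $x$-axis, so we may assume $c>0$. For part (i) I would reduce the flow on the invariant paraboloid $y^2+z^2-cx=0$ to a planar one: on the surface $x=(y^2+z^2)/c$, and substituting this into the equations for $\dot y,\dot z$ gives a planar polynomial vector field $(\dot y,\dot z)=(F(y,z),G(y,z))$ with $F=-(y^2+z^2)z/c-y+c$ and $G=(y^2+z^2)y/c-z$; one checks that this reduction is consistent with $\dot x$ from \eqref{main_eq_1} via $\dot x=(2y\dot y+2z\dot z)/c$. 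A direct computation gives $\partial F/\partial y+\partial G/\partial z\equiv-2$, so Bendixson's criterion rules out periodic orbits and polycycles of the reduced system, hence of the flow on the paraboloid.

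Next I would analyse the equilibria on the surface. The only finite equilibrium of \eqref{main_eq_1} with $s=2,r=0$ is $Q=(x_0,x_0,x_0^2)$, where $x_0$ is the unique real root of $x^3+x=c$, and $Q$ lies on the paraboloid. Linearizing the reduced planar system at $Q$ and using $y^2+z^2=cx_0$ there, one obtains trace $=-2<0$, determinant $=1+3x_0^4(1+x_0^2)^2/c^2>0$, and negative discriminant (note $x_0\neq0$ since $c\neq0$), so $Q$ is a stable focus. To see that all forward orbits on the surface tend to $Q$, I would use $\tfrac{d}{dt}(y^2+z^2)=-2(y^2+z^2)+2cy\le 2\sqrt{y^2+z^2}\,(|c|-\sqrt{y^2+z^2})$, which is negative once $y^2+z^2>c^2$; thus forward orbits on the paraboloid are bounded, their $\omega$-limit sets are nonempty compact connected invariant subsets of the surface, and since there is no periodic orbit and $Q$ is the only equilibrium, they equal $\{Q\}$. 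For the $\alpha$-limit, I would observe that in the Poincaré ball the closure of the paraboloid meets $\sss^2$ only at the point $P$ (the endpoint of the positive $x$-axis), since along the surface $x\sim(y^2+z^2)/c$ dominates $\sqrt{y^2+z^2}$, so the escape direction is always $(1,0,0)$; as $Q$ is a repeller in backward time and there are no periodic orbits or polycycles, every non-equilibrium orbit on the surface has $\alpha$-limit $P$. This forces the phase portrait of Figure~\ref{fig_surface_2} and proves (i).

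For (ii) and (iii) I would exploit the Darboux invariant of Proposition~\ref{p1}(b): along any orbit $y^2+z^2-cx=c_0e^{-2t}$ for a constant $c_0$, with $c_0\neq0$ precisely when the orbit lies off the surface. For (iii): as $t\to+\infty$ the quantity $y^2+z^2-cx\to0$, so the orbit approaches $\{f=0\}$; combined with the bound $y^2+z^2\le\max(y(0)^2+z(0)^2,c^2)$ for $t\ge0$ (whence $x$ is bounded forward, since $cx=y^2+z^2-c_0e^{-2t}$), the orbit is bounded forward, so its $\omega$-limit is a nonempty compact connected invariant subset of $\{f=0\}$, which by (i) is $\{Q\}$. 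For (ii): as $t\to-\infty$ we have $|y^2+z^2-cx|\to\infty$, so the orbit is unbounded in backward time and escapes to $\sss^2$; combining the infinity description of Proposition~\ref{p2} with a growth estimate showing that the $x$-component dominates and is eventually positive (so that the escape direction is $(1,0,0)$), the $\alpha$-limit is the infinite equilibrium $P$ at the origin of the chart $U_1$.

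The main obstacle I anticipate is exactly this last localization: Proposition~\ref{p2} identifies $P$ as a center for the dynamics on $\sss^2$, so the transverse (radial) direction must be examined separately, and the circle of equilibria at the endpoints of the plane $x=0$ is a competing candidate for the $\alpha$-limit of orbits off the surface. Ruling these out requires careful computations in the Poincaré charts near $P$ (and near that circle of equilibria), together with the rate $y^2+z^2-cx\sim c_0e^{-2t}$, to show that every backward orbit eventually enters and stays in a neighborhood of $P$ in the ball; the case $c_0>0$, where $y^2+z^2$ itself may grow like $e^{-2t}$ backward in time, is the most delicate and is where most of the effort will be needed.
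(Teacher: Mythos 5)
Your treatment of statements (i) and (iii) is correct and follows essentially the same route as the paper: you reduce to the planar system \eqref{main_eq_c2} on the paraboloid, rule out closed orbits by Bendixson's criterion (divergence $-2$), identify the unique equilibrium $Q=(x_0,x_0,x_0^2)$ with $x_0^3+x_0=c$ and check it is a stable focus (your trace/determinant computation agrees with the paper's eigenvalues $-1\pm i\sqrt{3}\,x_0$), and for (iii) you use the Darboux invariant $(y^2+z^2-cx)e^{2t}$ of Proposition \ref{p1}(b). Your extra ingredients, the estimate $\tfrac{d}{dt}(y^2+z^2)=-2(y^2+z^2)+2cy$ giving forward boundedness and the Poincar\'e--Bendixson step identifying the $\omega$-limit with $\{Q\}$, are welcome details that the paper leaves implicit, and the symmetry reduction to $c>0$ is legitimate.

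The genuine gap is in statement (ii), and you have named it yourself without closing it. From $(y^2+z^2-cx)e^{2t}=k\neq 0$ you only obtain that the backward orbit escapes to the sphere at infinity; a priori its $\alpha$-limit could then be a connected subset of the circle of equilibria at the endpoints of the plane $x=0$, a periodic orbit of the infinity flow surrounding $P$, or the antipodal center, and the decisive claim (``the $x$-component dominates and is eventually positive'') is asserted but never proved. Note that for $k<0$ (orbits inside the paraboloid, with $c>0$) your own relations already finish the job: $cx=y^2+z^2+|k|e^{-2t}\to\infty$ while $\sqrt{y^2+z^2}\le\sqrt{cx}$, so the escape direction converges to $(1,0,0)$ and the $\alpha$-limit is exactly $P$. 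The unresolved case is $k>0$ (orbits outside the paraboloid), where $y^2+z^2\ge cx+ke^{-2t}$ grows like $e^{-2t}$ and nothing in your argument excludes escape toward the equilibria at the ends of the plane $x=0$; this is precisely the point the paper settles by combining the Darboux invariant with the description of the surface and of the flow at infinity (Figure \ref{F33}, Proposition \ref{p2}), whereas your proposal, by your own admission, stops at a plan (``careful computations in the Poincar\'e charts'' near $P$ and near the circle of equilibria) rather than a proof. As it stands, statement (ii) is therefore not established by the proposal.
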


\begin{figure}[h]
\begin{overpic}[scale=0.4]{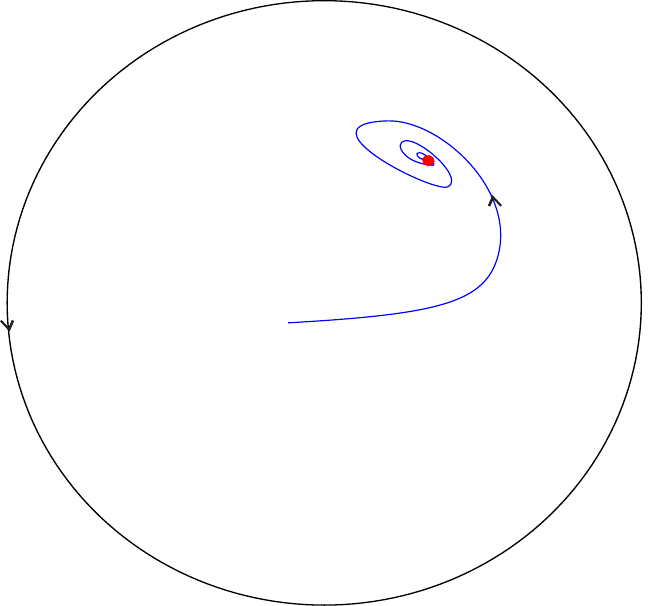}
        \put(55,62){\scriptsize{$Q$}}
		\end{overpic}
\caption{\footnotesize{
The topological phase portraits on the invariant algebraic surface $y^2+z^2-cx=0$. The $Q$ point is the stable focus of system \eqref{main_eq_1} on the algebraic surface $y^2+z^2-cx=0.$}}
\label{fig_surface_2}
\end{figure}

We note that the Ehrhard-M\"uller system \eqref{main_eq_1} having the invariant straight line $y^2+z^2=0$, i.e. the $x$-axis, coincides with the Lorenz system $\dot{x}=s(y-x)$, $\dot{y}=-xz-y$, $\dot{z}=xy-z$, while the Ehrhard-M\"uller system \eqref{main_eq_1} having the invariant surface $y^2+z^2-r x^2=0$, coincides with the Lorenz system $\dot{x} = y - x$, $\dot{y} = rx - xz - y$, $\dot{z} = xy - z$. Hence the phase portrait in the Poincar\'e ball of the Ehrhard-M\"uller system \eqref{main_eq_1} having these two invariant algebraic surfaces have been studied in the papers \cite{CZ,LMS}.

The paper is organized as follows. In Subsection \ref{s21} we introduce the Poincaré compactification in $\R^2$ and $\R^3$. Subsection  \ref{sec:blowup} is dedicated to the vertical blow-ups that are essential for analyzing the local dynamics of the planar polynomial differential systems near equilibrium points having its linear part identically zero. In Subsection \ref{sec:ben_crit} we examine the Bendixson and the Bendixson-Dulac criteria, that aids in analyzing the non-existence of periodic orbits in the planar differential systems. Subsection \ref{Darboux} is dedicated to introduce the Darboux invariants. The proofs of Theorem \ref{prop_alg} and Propositions \ref{p1} and \ref{p2} are given in Section \ref{s3}. In Sections \ref{sta} and \ref{stb} we prove Theorems \ref{ta} and \ref{tb}, respectively.

\section{Preliminaries}\label{sec:Preliminaries}

This section is devoted to establish some basic results that will be used throughout the paper. We divide it in four subsections.

\subsection{Poincar\'e Compactification}\label{poincare_comp}\label{s21}

For determining the global dynamics of a polynomial differential system in $\R^3$ or in an invariant algebraic surface we need to know the local phase portraits of its finite and infinite equilibrium points in the Poincaré ball and in the Poincaré disc, respectively.

\subsubsection{Poincar\'e Compactification in $\mathbb{R}^2$}\label{s211}

Let $\sss^2=\{\mathbf{z}\in\mathbb{R}^3:||\mathbf{y}||=1\}$ be the unit sphere of $\mathbb{R}^3$ centered at the origin of coordinates. From \cite{DLA}, we know that a polynomial vector field $X$ in $\R^2$ induces two copies of $X$ in the sphere $\sss^2$, one in the open northern hemisphere and the other in the open southern hemisphere of the sphere $\mathbb{S}^2$. This vector field on $\mathbb{S}^2\setminus \mathbb{S}^1$ can be extended analytically to a vector field $p(X)$ on the whole sphere $\mathbb{S}^2$. Here $\sss^1=\{\mathbf{y}\in \sss^2:y_3=0\}$ denotes the equator of $\mathbb{S}^2$, that can be identified with the infinity of $\R^2$. The vector field $p(X)$ allows to study the dynamics of the vector field $X$ in the neighbourhood of infinity, i.e. in the neighbourhood of the equator $\sss^1.$

To get the analytical expression for $p(X)$ we should consider the sphere $\sss^2$ as a smooth manifold. In this context, it is enough to choose the six local charts given by $U_i=\{\mathbf{y}\in \sss^2: y_i>0\}$ and $V_i=\{\mathbf{y}\in \sss^2: y_i<0\}$, for $i=1, 2, 3$, with the corresponding coordinate maps $\varphi_i: U_i\rightarrow \mathbb{R}^2$ and $\psi_i: V_i\rightarrow \mathbb{R}^2$, defined by $\varphi_k(\mathbf{y}) = \psi_k(\mathbf{y}) = (y_m/y_k, y_n/y_k)$ for $m < n$ and $m, n\ne k$. Denote by $(u, v)$ the local coordinates on $U_i$ and $V_i$ for $i=1,2,3.$ If $X=(P,Q)$ from \cite{DLA} the vector field $p(X)$ in these local charts is
\begin{equation}\label{poincare_comp}
\begin{array}{rl}		(\dot{u},\dot{v})=&\left(v^n\left(-uP\left(\dfrac{1}{v},\dfrac{u}{v}\right)+Q\left(\dfrac{1}{v},\dfrac{u}{v}\right)\right),-v^{n+1}P\left(\dfrac{1}{v},\dfrac{u}{v}\right)\right) \quad\text{in }\quad U_1;\vspace{0.3cm}\\
(\dot{u},\dot{v})=&\left(v^n\left(-uQ\left(\dfrac{u}{v},\dfrac{1}{v}\right)+P\left(\dfrac{u}{v},\dfrac{1}{v}\right)\right),-v^{n+1}Q\left(\dfrac{u}{v},\dfrac{1}{v}\right)\right) \quad\text{in }\quad U_2;\vspace{0.3cm}\\
(\dot{u},\dot{v})=&\left(P(u,v),Q(u,v)\right) \quad\text{in }\quad U_3,
\end{array}
\end{equation}
where $n$ is the degree of the polynomial vector field $X$. We emphasize that the expressions of the vector field $p(X)$ in the local chart $(V_i,\psi_i)$ is the same that as in the local card $(U_i,\varphi_i)$ multiplied by $(-1)^{n-1}$ for $i=1,2,3.$

The points of the infinity in all local charts are of the form $(u,0)$. The infinity $\mathbb{S}^1$ is invariant under the flow of $p\left(X\right)$.

\subsubsection{Poincar\'e Compactification in $\mathbb{R}^3$}\label{s212}

Let $\mathbb{S}^3=\{\mathbf{y}\in\mathbb{R}^4:||\mathbf{y}||=1\}$ be the unit sphere in $\mathbb{R}^4$ centered at the origin of coordinates. From \cite{CL}, we know that a polynomial vector field $X$ of degree $n$ in $\R^3$ induces two copies on the sphere $\mathbb{S}^3$, one in the open northern hemisphere and the other in the southern hemisphere of the sphere $\mathbb{S}^3$. This induced vector field on $\mathbb{S}^3\setminus \sss^2$ can be extended analytically to a vector field $p(X)$ on the whole sphere $\sss^3$. Here  $\mathbb{S}^2=\{\mathbf{y}\in \mathbb{S}^3:y_4=0\}$, the equator of $\sss^3$, is identified with the infinity of $\R^3$. The vector field $p(X)$ allows to study the dynamics of the vector field $X$ in the neighborhood of infinity, i.e., in the neighborhood of the equator $\mathbb{S}^2.$

To get the analytical expression for $p(X)$ we shall consider the sphere $\sss^3$ as a smooth manifold. It is enough to choose the 8 local charts  $U_i=\{\mathbf{y}\in \mathbb{S}^3: y_i>0\}$ and  $V_i=\{\mathbf{y}\in \mathbb{S}^3: y_i<0\}$, for $i=1, 2, 3, 4$,  with the corresponding coordinate maps $\varphi_i: U_i\rightarrow \mathbb{R}^3$ and $\psi_i: V_i\rightarrow \mathbb{R}^3$, defined by $\varphi_k(\mathbf{y}) = \psi_k(\mathbf{y}) = (y_\ell/y_k, y_m/y_k, y_n/y_k)$ for $\ell<m < n$ and $\ell,m, n\ne k$. Denote by $(z_1, z_2, z_3)$ the local coordinates on $U_i$ and $V_i$ for $i=1,2,3,4.$ If $X=(P_1,P_2,P_3)$ from \cite{Du} the vector field $p(X)$ in these local charts is
\begin{equation}\label{poincare_comp_3}
\begin{array}{rl}
z_3^{n}(-z_1P^1_1+P^2_1,-z_2P^1_1+P^3_1,-z_3P^1_1) \quad\text{in }\quad U_1;\vspace{0.2cm}\\
z_3^{n}(-z_1P^2_2+P^1_2,-z_2P^2_2+P^3_2,-z_3P^2_2) \quad\text{in }\quad U_2;\vspace{0.2cm}\\
z_3^{n}(-z_1P^3_3+P^1_3,-z_2P^3_3+P^2_3,-z_3P^3_3) \quad\text{in }\quad U_3;\vspace{0.2cm}\\
(P^1_4,P^2_4, P^3_4) \quad\text{in }\quad U_4,
\end{array}
\end{equation}
where $P^i_1=P^i\left(\dfrac{1}{z_3},\dfrac{z_1}{z_3},\dfrac{z_2}{z_3}\right),$ $P^i_2=P^i\left(\dfrac{z_1}{z_3},\dfrac{1}{z_3},\dfrac{z_2}{z_3}\right),$ $P^i_3=P^i\left(\dfrac{z_1}{z_3},\dfrac{z_2}{z_3},\dfrac{1}{z_3}\right),$ $P^i_4=P^i(z_1,z_2,z_3),$ $\Delta(z)=(1+\sum_{i=1}^3z_i^2)^{1/2}$ and $n$ is the degree of the polynomial vector field $X$.

The expression for $p(X)$ in the local chart $V_i$ is the same as in $U_i$ multiplied by $(-1)^{n-1}$ for $i=1,2,3,4$.

The points of the infinity in all local charts are of the form $(z_1,z_2,0)$. The infinity $\mathbb{S}^2$ is invariant under the flow of $p\left(X\right)$.

The proof of the following result can be found in \cite[Lemma 2.1]{LMS}.

\begin{lemma}\label{boundary}
Let $f(x_1, x_2, x_3)=0$ be an algebraic surface of degree $m$ in $\R^3$. The extension of this surface to the boundary of the Poincar\'e ball is the curve described by the intersection of the two surfaces
$$
y^m_4f\left(\dfrac{x_1}{y_4},\dfrac{x_2}{y_4},\dfrac{x_3}{y_4}\right) = 0, \quad y_4= 0.
$$
\end{lemma}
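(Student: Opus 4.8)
The plan is to unravel the definition of the Poincar\'e compactification of $\R^3$ and push the equation $f=0$ forward through the central projection that defines it. Recall from Subsubsection \ref{s212} that the Poincar\'e ball is built from the central projection sending a point $\mathbf{x}=(x_1,x_2,x_3)\in\R^3$ to the point $\mathbf{y}=(x_1,x_2,x_3,1)/\Delta(\mathbf{x})$ of the open northern hemisphere $\{\mathbf{y}\in\sss^3:y_4>0\}$, where $\Delta(\mathbf{x})=(1+x_1^2+x_2^2+x_3^2)^{1/2}$, after which everything is read off in the local charts $U_i,V_i$. Under this projection $x_i=y_i/y_4$, so the image of the surface $\{f=0\}$ inside the open hemisphere is exactly the set of $\mathbf{y}$ with $f(y_1/y_4,y_2/y_4,y_3/y_4)=0$.

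Next I would clear denominators. Since $y_4>0$ on the open northern hemisphere, multiplying by $y_4^m$ does not change this zero set, so there the image of $\{f=0\}$ coincides with $\{F=0\}$, where
$$
F(y_1,y_2,y_3,y_4):=y_4^{\,m}\,f\!\left(\frac{y_1}{y_4},\frac{y_2}{y_4},\frac{y_3}{y_4}\right).
$$
Writing $f=f_m+f_{m-1}+\cdots+f_0$ as a sum of homogeneous parts $f_j$ of degree $j$, one checks at once that $F=\sum_{j=0}^{m} y_4^{\,m-j}f_j(y_1,y_2,y_3)$ is a genuine polynomial, homogeneous of degree $m$ in $(y_1,y_2,y_3,y_4)$, namely the homogenization of $f$. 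Hence $\{F=0\}$ is a well-defined algebraic hypersurface of $\sss^3$ whose intersection with the open hemisphere is the compactified copy of $\{f=0\}$.

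Finally I would pass to the equator $\sss^2=\{y_4=0\}$, which is the infinity of $\R^3$ and the boundary of the Poincar\'e ball. The extension of the surface to that boundary is, by definition, the closure in $\sss^3$ of the compactified copy, intersected with $\{y_4=0\}$. Because $F$ is continuous and vanishes identically on the compactified copy, this closure is contained in $\{F=0\}$; intersecting with $\{y_4=0\}$ yields the curve $\{F=0\}\cap\{y_4=0\}$, which, after relabelling the four coordinates, is precisely the intersection of the two surfaces $y_4^{\,m}f(x_1/y_4,x_2/y_4,x_3/y_4)=0$ and $y_4=0$ in the statement; reading $F=0$, $y_4=0$ in the charts $U_i,V_i$ with $i=1,2,3$ reproduces the expressions one uses in practice. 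The step I expect to be the main obstacle is the reverse inclusion: showing that \emph{every} point of $\{F=0,\ y_4=0\}$ actually lies in the closure of the compactified surface, so that no spurious component is added at infinity and the description is sharp. This is handled by observing that $F(y_1,y_2,y_3,0)=f_m(y_1,y_2,y_3)$ is the leading form of $f$, and that a point $(\mathbf{a},0)\in\sss^2$ with $f_m(\mathbf{a})=0$ arises as the limit of the projections of $t\,\mathbf{a}$ (or of projections of a suitable curve lying on $\{f=0\}$) as $t\to+\infty$. Since for the applications in this paper only the containment direction is needed to locate the degree-$2$ surfaces of Theorem \ref{prop_alg} at infinity, one may present the forward argument in full detail and invoke \cite[Lemma 2.1]{LMS} for the remaining verification.
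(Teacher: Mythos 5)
Your forward computation is essentially the intended proof: the paper itself gives no argument for Lemma \ref{boundary}, deferring entirely to \cite[Lemma 2.1]{LMS}, and the proof there is the same central-projection calculation you carry out --- write the surface in the coordinates $x_i=y_i/y_4$ of the compactification, clear denominators by multiplying with $y_4^m$ (which is harmless on the open hemisphere $y_4>0$), observe that $F=y_4^m f(y_1/y_4,y_2/y_4,y_3/y_4)$ is the homogenization of $f$, and then set $y_4=0$. Up to that point your proposal is correct and matches the cited source.

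The weak point is your treatment of the reverse inclusion. The specific argument you sketch does not work: for $\mathbf a$ with $f_m(\mathbf a)=0$ the points $t\mathbf a$ need not lie on $\{f=0\}$, so their projections say nothing about the closure of the compactified surface, and a curve on $\{f=0\}$ tending to the direction $\mathbf a$ need not exist. In fact the sharpness you are after is false if ``extension'' is read as topological closure, and the paper's own surface $x^2-z=0$ already shows this: its homogenization restricted to $y_4=0$ gives the full great circle $\{y_1=0\}\cap\sss^2$, whereas points of the paraboloid $z=x^2$ escape to infinity only in directions whose third coordinate is nonnegative, so the closure meets $\sss^2$ only in the closed half of that circle with $y_3\ge 0$. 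The lemma must therefore be understood as describing the algebraic extension of the surface, namely the trace at $y_4=0$ of the zero set of the homogenized polynomial $F$; with that reading your forward computation already completes the proof, the reverse inclusion is not needed, and the paragraph on limits of rays should simply be removed rather than repaired.
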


\subsection{Vertical Blow Up}\label{sec:blowup}

Consider a polynomial differential system in $\R^2$ described by the equations:
\begin{equation}\label{e1}
\dot{x} = P(x, y) = P_n(x, y) + \ldots,\quad
\dot{y} = Q(x, y) = Q_n(x, y) + \ldots,
\end{equation}
where \(P\) and \(Q\) are coprime polynomials, \(P_n\) and \(Q_n\) are homogeneous polynomials of degree \(n \in \mathbb{N}\), and the dots indicate higher-order terms in \(x\) and \(y\). Since \(n > 0\), the origin is an equilibrium point of the system \eqref{e1}. The \textit{characteristic directions} at the origin correspond to the straight lines through the origin, determined by the real linear factors of the homogeneous polynomial:
\begin{equation}\label{RR}
R_n(x,y) = P_n(x,y)y - Q_n(x,y)x.
\end{equation}
It is established that orbits starting or ending at the origin do so tangentially along these characteristic directions. For further details on characteristic directions, refer to \cite{ALGM}.

Assuming there is an equilibrium point at the origin, as described in system \eqref{e1}, and that this equilibrium is linearly zero, we will analyze its local phase portrait using vertical blow-ups.

We define a vertical blow-up in the \(y\) direction through the variable transformation \((u, v) = (x, y/x)\). This transformation maps the origin of system \eqref{e1} to the straight line \(u =x= 0\). By studying the dynamics of the differential system in a neighborhood of this line, we effectively analyze the local phase portrait of the equilibrium point at the origin of system \eqref{e1}. However, prior to performing a vertical blow-up, we must ensure that the direction \(x = 0\) is not a characteristic direction of the origin. If \(x = 0\) is indeed a characteristic direction, we perform an appropriate twist using the transformation \((x, y) = (v + \alpha u,v)\) with \(\alpha \neq 0\).

\subsection{Bendixson-Dulac criterion}\label{sec:ben_crit}

In what follows we recall a result that allows to establish the non-existence of closed orbits for a $C^1$ differential systems in the plane $\R^2$ of the form  \begin{equation}\label{Ben_C}
x'=P(x,y),\, y'=Q(x,y).
\end{equation}
The criterion is due to Bendixson \cite{Be} (see also Theorem 7.10 of \cite{DLA}) and says: Let $D$ be a simply-connected domain in $\R^2$. Assume that the divergence of the system
$$
\frac{\p P}{dx}+\frac{\p Q}{dy}
$$
has constant sign in $D$, except perhaps in a subset of $D$ of zero Lebesgue measure. Then the differential system \eqref{Ben_C} has no closed orbits contained in $D$.

This criterion was generalized by H. Dulac, see Theorem 7.12 of \cite{DLA} as follows: If $D$ is a simply-connected domain in $\R^2$, if there exists a $C^1$ function $f(x,y)$ such that
$$
\frac{\p (fP)}{dx}+\frac{\p (fQ)}{dy}
$$
has constant sign in $D$, except perhaps in a subset of $D$ of zero Lebesgue measure. Then the differential system \eqref{Ben_C} has no closed orbits contained in $D$.

\subsection{Darboux invariants}\label{Darboux}

Let $\varphi_p(t)$ be the solution of the differential system \eqref{main_eq_1} passing through the point $p \in \R^3$, defined on its maximal
interval $I_p=(\alpha_p,\omega_p)$. If $\omega_p=\infty$ we define the set 
\[
\omega(p)=\{q\in\Delta\,:\,\,\mbox{there exist }\{t_n\}\text{ with }t_n\to\infty \,\text{ and } \varphi(t_n)\to q\text{ when }n\to\infty\}. 
\]
In the same way, if $\alpha_p=-\infty$ we define the set
\[
\alpha(p)=\{q\in\Delta\,:\,\,\mbox{there exist }\{t_n\}\text{ with }t_n\to-\infty \,\text{ and } \varphi(t_n)\to q\text{ when }n\to\infty\}. 
\]

The sets $\omega(p)$ and $\alpha(p)$ are called the {\it $\omega$--limit set} (or simply {\it $\omega$--limit} and the {\it $\alpha$--limit set} (or {\it
$\alpha$--limit}) of $p$, respectively.

The existence of a Darboux invariant provides information about the $\omega$- and $\alpha$-limit sets of all orbits of system \eqref{main_eq_1}. More precisely, we have the following result.

\begin{proposition}\label{p3}
Let $I(x,y,z,t) = f(x,y,z)e^{st}$ be a Darboux invariant of system \eqref{main_eq_1}. Let $p \in \R^3$ and $\varphi_p(t)$ be the solution of system \eqref{main_eq_1} with maximal interval $(\alpha_p,\omega_p)$ such that $\varphi_p(0)=p$. Then
\begin{itemize}
\item[(i)] If $\omega_p = \infty$ then $\omega(p) \subset \{f(x,y,z) = 0\} \cup\sss^2$.

\item[(ii)] If $\alpha_p=-\infty$ then $\alpha(p) \subset \{f(x,y,z) = 0\} \cup\sss^2$.
\end{itemize}
Here $\sss^2$ denotes the sphere of the infinity of the Poincaré ball.
\end{proposition}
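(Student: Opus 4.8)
The plan is to argue directly from the defining property of a Darboux invariant together with the behaviour of the exponential factor $e^{st}$, splitting into cases according to the sign of $s$. Recall that $I(x,y,z,t)=f(x,y,z)e^{st}$ being a Darboux invariant means $dI/dt\equiv 0$ along solutions, so along the orbit $\varphi_p(t)$ we have $f(\varphi_p(t))e^{st}=f(p)$ for all $t\in(\alpha_p,\omega_p)$; that is, $f(\varphi_p(t))=f(p)e^{-st}$. (If $s=0$ the statement is vacuous or degenerate, so one assumes $s\neq 0$, indeed $s>0$ in all four cases of Proposition \ref{p1}; I would note this at the outset.)

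For part (i), suppose $\omega_p=\infty$ and let $q\in\omega(p)\subset\R^3$, so there is a sequence $t_n\to\infty$ with $\varphi_p(t_n)\to q$. If the orbit is bounded we may evaluate $f$ by continuity: $f(q)=\lim_{n\to\infty} f(\varphi_p(t_n))=\lim_{n\to\infty} f(p)e^{-st_n}$. Since $s>0$ and $t_n\to\infty$, the right-hand side tends to $0$, hence $f(q)=0$, i.e. $q\in\{f=0\}$. If instead the orbit $\varphi_p(t)$ is unbounded, then part of its $\omega$-limit in the Poincaré ball lies on the sphere at infinity $\sss^2$; in the compactified picture any finite limit point $q$ of $\varphi_p(t_n)$ still satisfies $f(q)=0$ by the same continuity argument, while the remaining limit points lie in $\sss^2$. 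In either case $\omega(p)\subset\{f(x,y,z)=0\}\cup\sss^2$. Part (ii) is entirely symmetric: if $\alpha_p=-\infty$ and $q\in\alpha(p)$ is a finite limit point along $t_n\to-\infty$, then $f(q)=\lim f(p)e^{-st_n}=0$ because $s>0$ and $t_n\to-\infty$ forces $e^{-st_n}\to 0$; the rest of $\alpha(p)$, if any, lies on $\sss^2$.

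The only genuinely delicate point is the bookkeeping between the "finite" picture in $\R^3$ and the compactified picture in the Poincaré ball: a priori $\omega(p)$ as defined in Subsection \ref{Darboux} is taken inside the set denoted $\Delta$, and one must make sure that the statement $\omega(p)\subset\{f=0\}\cup\sss^2$ is interpreted in the closed Poincaré ball, so that escaping-to-infinity orbits are accounted for by the $\sss^2$ term rather than contradicting the conclusion. Once that convention is fixed, the argument is just the continuity of $f$ applied to the identity $f(\varphi_p(t))=f(p)e^{-st}$, and there is no further obstacle. I would also remark that the same computation shows that when $f(p)\neq 0$ the orbit cannot have a finite $\omega$- or $\alpha$-limit point off $\{f=0\}$, which is the form in which the proposition gets used in the proofs of Theorems \ref{ta} and \ref{tb}.
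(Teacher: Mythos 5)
The paper itself does not prove Proposition \ref{p3}; it only refers to Proposition 5 of \cite{LO}, whose proof is essentially the direct argument you outline. Your treatment of part (i) is correct and matches that standard argument: from the constancy of $I$ along solutions one gets $f(\varphi_p(t))=f(p)e^{-st}$, so (for $s>0$) any finite point $q\in\omega(p)$ satisfies $f(q)=\lim_{n\to\infty}f(p)e^{-st_n}=0$ by continuity of $f$, while all other limit points in the closed Poincaré ball lie on $\sss^2$ by definition; your remark about interpreting $\omega(p)$ in the closed ball is the right bookkeeping.

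Part (ii), however, contains a genuine error. For $s>0$ and $t_n\to-\infty$ one has $-st_n\to+\infty$, hence $e^{-st_n}\to+\infty$, not $0$, so the equality $f(q)=\lim f(p)e^{-st_n}=0$ is false whenever $f(p)\neq 0$. The correct reading of the identity $f(\varphi_p(t))=f(p)e^{-st}$ in backward time is a dichotomy: if $f(p)\neq 0$ then $|f(\varphi_p(t_n))|\to\infty$, so the orbit leaves every compact subset of $\R^3$ as $t\to-\infty$ and there are no finite $\alpha$-limit points at all, giving $\alpha(p)\subset\sss^2$; if $f(p)=0$ then $f$ vanishes identically along the orbit and every finite $\alpha$-limit point lies in $\{f=0\}$. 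Either way $\alpha(p)\subset\{f=0\}\cup\sss^2$, so the conclusion survives, but your stated justification would ``show'' that backward orbits off $\{f=0\}$ accumulate on $\{f=0\}$, which is the opposite of what happens and of how statement (ii) is used in Theorems \ref{ta} and \ref{tb} (there the backward orbits go to infinity). Note also that the proposition as stated does not assume $s>0$; for $s<0$ the same dichotomy applies with the roles of (i) and (ii) interchanged, so if you restrict to $s>0$ you should say explicitly that this is only for the invariants of Proposition \ref{p1}, or give the argument for both signs.
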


For a proof of Proposition \ref{p3} see Proposition 5 of \cite{LO}.

\section{Proof of Theorem $\ref{prop_alg}$ and Propositions $\ref{p1}$ and $\ref{p2}$}\label{s3}

In this section, we provide the formal proofs for some results previously stated. First, we demonstrate Theorem $\ref{prop_alg}$ that characterizes the invariant algebraic surfaces of degree 2 in the Ehrhard-Müller system.

\begin{proof}[Proof of Theorem $\ref{prop_alg}$]
Let $X$ be the vector field defined by the differential system \eqref{VF}. To find all invariant algebraic surfaces $f(x, y, z)=0$ of degree 2 of system \eqref{main_eq_1}, we must find the coefficients $a_i,$ $i=1,\dots,9$ and $k_j,$ $j=1,2,3,4$ that satisfy equation \eqref{IAC}
with
$$
f(x,y,z)=a_0+a_1x+a_2y+a_3z+a_4x^2+a_5xy+a_6xz+a_7y^2+a_8yz+a_9z^2
$$
and
$$
k(x,y,z)=k_1x+k_2y+k_3z+k_4.
$$
Substituting $f$ and $k$ into equation \eqref{IAC}, and equating both polynomials we obtain the following system of equations whose variables are the coefficients $a_i,$ $k_j$, $r,$ $s,$ and $c$:
$$
\begin{array}{ll}
(i)\,\, a_2 c - a_0 k_4=0;                                         & (ii)\,\,a_5 c - a_1 k_4 - a_0 k_1 + a_2 r - a_1 s=0; \\
(iii)\,\,a_2 - 2 a_7 c + a_2 k_4 + a_0 k_2 - a_1 s=0;              & (iv)\,\, a_3 - a_8 c + a_3 k_4 + a_0 k_3=0;\\
(v)\,\,a_4 k_4 + a_1 k_1 - a_5 r + 2 a_4 s=0;                      & (vi)\,\,2 a_7 + a_7 k_4 + a_2 k_2 - a_5 s=0;\\
(vii)\,\,2 a_9+ a_9 k_4+a_3 k_3=0;                                 & (viii)\,\,2 a_8 + a_8 k_4 + a_3 k_2 + a_2 k_3 - a_6 s=0; \\
(ix)\,\,a_2 + a_6 + a_6 k_4 + a_3 k_1 + a_1 k_3 - a_8 r + a_6 s=0; & (x)\,\, a_8 - a_7 k_1 - a_5 k_2=0; \\
(xi)\,\,a_6 - a_5 k_1 - a_4 k_2=0;                                 & (xii)\,\,a_5 + a_6 k_1 + a_4 k_3=0;\\
(xiii)\,\,2 a_7 - 2 a_9 + a_8 k_1 + a_6 k_2 + a_5 k_3=0;           & (xiv)\,\, a_8 k_2 + a_7 k_3=0;\\
(xv)\,\, a_8 + a_9 k_1 + a_6 k_3=0;                                & (xvi)\,\,a_9 k_2+a_8 k_3=0;\\
(xvii)\,\,a_4 k_1=0;                                               & (xviii)\,\, a_7 k_2=0;\\
(xix)\,\,a_9 k_3=0;                                                & (xx)\,\, a_3 - a_5 - a_5 k_4 -  a_2 k_1 - a_1 k_2  +  \\
\quad                                                              & \quad\quad +2 a_7 r+ 2 a_4 s- a_5 s=0.
\end{array}
$$
It is straightforward to verify that $k_1=k_2=k_3=0.$ Thus, the equations $(xiv)$, $(xvi)$, $(xvii)$, $(xviii)$ and $(xix)$ simplify to $0 = 0$ and do not provide any additional information. Then using equations $(xii),$ $(xi),$ $(xv),$ $(ix)$ and $(xiii)$, we obtain that $a_5=a_6=a_8=a_2=0$ and $a_7=a_9,$ respectively. Since $a_2=0,$ equation $(i)$ implies that $a_0=0.$

Thus the system that we need to analyze reduces to the following relevant equations concerning the coefficients \( a_i \) and \( k_4 \):
\begin{multicols}{2}
\begin{itemize}
\item[$(ii)$] \( -a_1k_4 - a_1s = 0 \);
\item[$(iv)$] \( a_3 + a_3 k_4 = 0 \);
\item[$(vi)$] \( 2a_9+ a_9k_4 = 0 \);
\item[$(xx)$] \( a_3+2 a_9 r + 2 a_4 s = 0 \).
\item[$(iii)$] \( -2 a_9 c - a_1 s = 0 \);
\item[$(v)$] \( a_4k_4 + 2a_4s = 0 \);
\item[$(vii)$] \( 2a_9+ a_9k_4 = 0 \);
\end{itemize}
\end{multicols}
From equation $(vii)$ we conclude that $a_9=0$ or $k_4=-2.$

\noindent\textit{Case} 1: $a_9=0$. Since the invariant surface that we are looking for is of degree 2, we have that $a_4\neq0$. Thus, using equations $(iii),$ $(v)$, $(xx)$ and $(iv)$ we obtain that $a_1=0,$ $k_4=-2s,$ $a_3=-2sa_4$ and $s=1/2,$ respectively. Setting $a_4=1$, we get that $f(x,y,z)=x^2-z$ is the invariant surface $(a)$ given in Table \ref{tb_2}.

\noindent\textit{Case} 2: $k_4=-2$. From equation $(iv),$ we obtain that $a_3=0.$ In addition, equation $(v)$ implies that $a_4=0$ or $s=1.$

\noindent\textit{Subcase} 2.1: $a_4=0$. Since $a_9\neq 0$, equation $(xx)$ and $(iii)$ implies that $r=0$ and $sa_1=-2ca_9$, respectively. Furthermore, by equation $(ii)$, we get $a_1=0$ or $s=2.$
\begin{itemize}
\item If $s=2$, then taking $a_9=1$, we get that $f(x,y,z)=-cx+y^2+z^2$ is the invariant surface $(b)$ of Table \ref{tb_2}.
\item Otherwise, employing equation $(iii)$ we obtain that $a_1=0$ implies that $c=0$. Again, taking $a_9=1$, we obtain that $f(x,y,z)=y^2+z^2$ is the  invariant surface $(c)$ in Table \ref{tb_2}.
\end{itemize}

\noindent\textit{Subcase} 2.2: $s=1$. By equation $(ii),$ we know that $a_1=0.$ Thus, $(iii)$ implies that $c=0.$ In addition, from equation $(xx)$ we can conclude that $a_4=-a_9r.$ Setting $a_9=1$ we get that $f(x,y,z)=-rx^2+y^2+z^2$ is the invariant surface $(d)$ of Table \ref{tb_2}.
This completes the proof of Theorem \ref{prop_alg}.  
\end{proof}

Having proven Theorem $\ref{prop_alg}$ that outlines the invariant algebraic surfaces of degree 2 in the Ehrhard-Müller system, we now turn to Proposition \ref{p1}. This proposition demonstrates the existence of Darboux invariants for the system, offering further characterization of the system's dynamics

\begin{proof}[Proof of Proposition $\ref{p1}$]
We shall prove the statement (a) of this proposition, the other statements are proved in a similar way.

Let $I=I(x,y,z,t)= (x^2-z)e^t$. Then, when $s=1/2$ we have
$$
\dfrac{dI}{dt}=\dfrac{\p I}{\p x}\dfrac{dx}{dt}+\dfrac{\p I}{\p y}\dfrac{dy}{dt}+\dfrac{\p I}{\p z}\dfrac{dz}{dt}+\dfrac{\p I}{\p t} 
=xe^t(y-x) -e^t(xy-z)+ (x^2-z)e^t=0.
$$
Hence $(x^2-z)e^t$ is a Darboux invariant.
\end{proof}

 In what follows, we proceed to prove Proposition \ref{p2}. This one provides an important geometric insight by describing the phase portrait of the system on the infinity sphere, which reveals new structural features of the system.

\begin{proof}[Proof of Proposition $\ref{p2}$]
Using the results stated in Subsubsection \ref{s212} we obtain the expressions of the Ehrhard-M\"uller system \eqref{main_eq_1} in the local charts $U_j$ for $j=1,2,3$. Thus in the local chart $U_1$ we have the system
\begin{equation}\label{u1}
\begin{array}{l}
\dot z_1= -z_2 + r z_3 +(s-1)z_1 z_3+ c z_3^2 - s z_1^2 z_3, \vspace{0.2cm} \\
\dot z_2= z_1 +(s-1)z_2 z_3 - s z_1 z_2 z_3, \vspace{0.2cm} \\
\dot z_3=  s(1 - z_1)  z_3^2.
\end{array}
\end{equation}
The unique infinite equilibrium point of system \eqref{u1} is the $(0,0,0)$. Restricting system \eqref{u1} at infinity, i.e. at $z_3=0$, we obtain the system $\dot z_1=-z_2$, $\dot z_2=z_1$. Hence the equilibrium $(0,0,0)$ restricted to infinity is a linear center. So we have another linear center at the origin of the local chart $V_1$.
	
In the local chart $U_2$ we have the system
\begin{equation}\label{u2}
\begin{array}{l}
\dot z_1= s z_3 +(1-s)z_1 z_3 + z_1^2 z_2- r z_1^2 z_3 - c z_1 z_3^2, \vspace{0.2cm} \\
\dot z_2= z_1 + z_1 z_2^2 - r z_1 z_2 z_3 - c z_2 z_3^2, \vspace{0.2cm} \\
\dot z_3= z_3  (z_3+ z_1 z_2 - r z_1 z_3 - c z_3^2).
\end{array}
\end{equation}
System \eqref{u2} has the straight line $z_1=0$ filled with infinite equilibria.
	
Finally in the local chart $U_3$ we have the system
\begin{equation}\label{u3}
\begin{array}{l}
\dot z_1= (1-s)z_1 z_3 + s z_2 z_3 -z_1^2 z_2, \vspace{0.2cm} \\
\dot z_2= -z_1 + r z_1 z_3 - z_1 z_2^2 + c z_3^2, \vspace{0.2cm} \\
\dot z_3=  z_3 (z_3-z_1 z_2).
\end{array}
\end{equation}
In this local chart we only need to see if the origin of coordinates is an infinite equilibrium point, because all the other infinite equilibrium points already have been detected in the local charts $U_1$ and $U_2$. The origin $(0,0,0)$ of system \eqref{u3} is an infinite equilibrium point, that together with the equilibria of the local chart $U_2$, $V_2$ and $V_3$ form a circle filled with equilibria on the infinite sphere $\sss^2$. This completes the proof of Proposition \ref{p2}.
\end{proof}

\section{Proof of Theorem $\ref{ta}$}\label{sta}

Now we shall study the dynamics of the Ehrhard-M\"uller systems in the Poincaré ball having the invariant algebraic surface $x^2-z=0$.

First we analyze the dynamics of the differential system \eqref{main_eq_1} on the invariant surface $x^2-z=0$. Substituting $z=x^2$ in system \eqref{main_eq_1} we obtain the differential system
\begin{equation}\label{main_eq_c1_1_2}
\dot{x}=\dfrac{1}{2}(y-x),\quad
\dot{y}=rx-x^3-y+c.
\end{equation}
We begin by studying the local phase portraits of the infinite equilibrium points of system \eqref{main_eq_c1_1_2}.

\begin{figure}[h]
\begin{overpic}[scale=0.6]{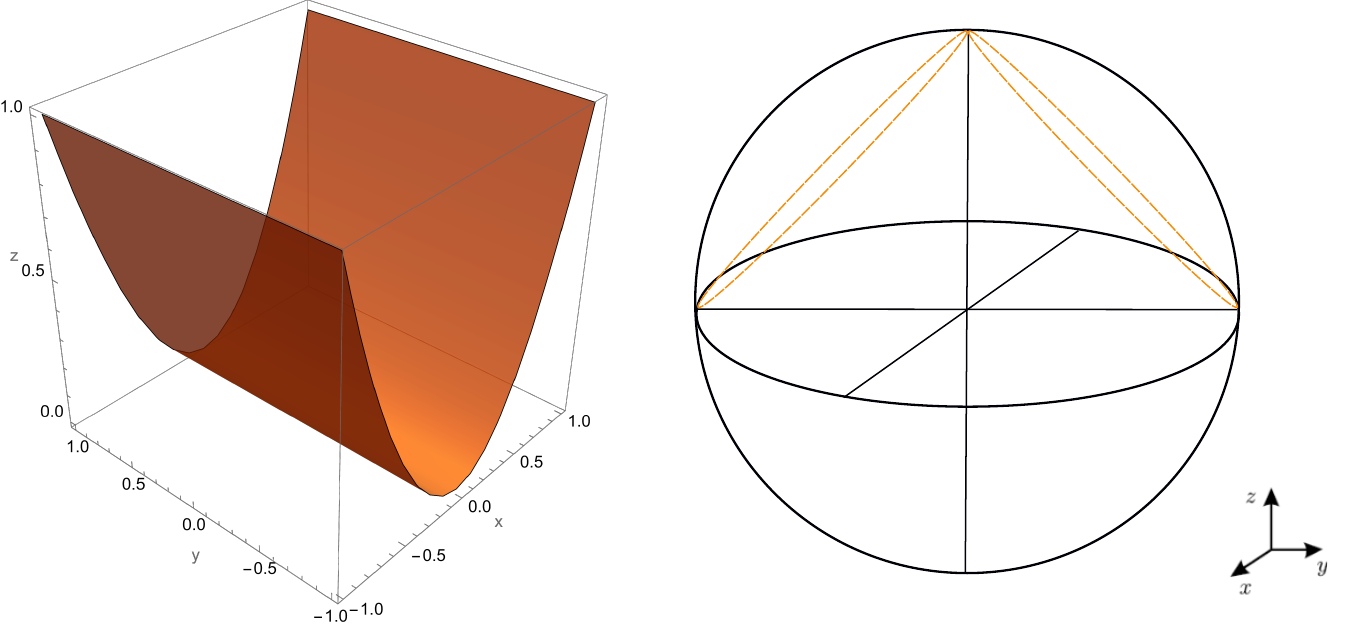}
             \put(25,-2){$(a)$}
        \put(73,-2){$(b)$}
		\end{overpic}
\caption{\footnotesize{(a) The invariant algebraic surface $x^2-z=0$. (b) The interception of the invariant algebraic surface $x^2-z=0$ with the sphere $\mathbb{S}^2$ of the infinity.}}
\label{F1}
\end{figure}

\begin{lemma}\label{L1}
The differential system \eqref{main_eq_c1_1_2} has two diametrically opposed infinite equilibrium points in the Poincaré disc at the origins of the local charts $U_2$ and $V_2$, that are formed by two hyperbolic sectors on the Poincaré sphere. Moreover, the intersection of the invariant surface $x^2-z=0$ with the sphere of the infinity $\mathbb{S}^2$ is shown in Figure \ref{F1}.
\end{lemma}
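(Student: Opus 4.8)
The plan is to apply the Poincaré compactification in $\R^2$ (Subsubsection \ref{s211}) to the planar system \eqref{main_eq_c1_1_2}, which has degree $n=3$, and then to classify the local phase portrait at each infinite equilibrium using the blow-up technique of Subsection \ref{sec:blowup}. First I would compute $R_n(x,y)=P_3(x,y)\,y-Q_3(x,y)\,x$ where $P_3=0$ and $Q_3=-x^3$ are the degree-$3$ homogeneous parts of $P=\tfrac12(y-x)$ and $Q=rx-x^3-y+c$; this gives $R_3(x,y)=x^4$, so the only characteristic direction at infinity is $x=0$, i.e. the endpoints of the $y$-axis — equivalently the origins of the charts $U_2$ and $V_2$. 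This already tells us there are exactly two infinite equilibria and they are diametrically opposed. I would then write down the chart-$U_1$ equations via formula \eqref{poincare_comp}: with $P(1/v,u/v)=\tfrac12(u/v-1/v)$ and $Q(1/v,u/v)=r/v-1/v^3-u/v+c$, after clearing the $v^{n}=v^3$ factor one checks the origin of $U_1$ is \emph{not} an equilibrium (the $R_3=x^4$ computation guarantees this), so nothing new appears there.

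Next I would obtain the chart-$U_2$ expression from \eqref{poincare_comp}. Writing $(u,v)$ for the coordinates on $U_2$, so that $x=u/v$, $y=1/v$, the compactified field is
\[
\dot u = v^{3}\!\left(-u\,Q\!\left(\tfrac{u}{v},\tfrac1v\right)+P\!\left(\tfrac{u}{v},\tfrac1v\right)\right),\qquad
\dot v = -v^{4}\,Q\!\left(\tfrac{u}{v},\tfrac1v\right).
\]
Substituting and simplifying, the lowest-order terms near $(u,v)=(0,0)$ should be dominated by the $-x^3$ term in $Q$, producing a leading behaviour of the form $\dot u = u^3+\cdots$, $\dot v = u^2 v+\cdots$ (up to sign and constant), so the linear part at the origin of $U_2$ is identically zero. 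This is the step where the blow-up of Subsection \ref{sec:blowup} is needed: since $v=0$ (the line at infinity) is invariant and $u=0$ is a characteristic direction, I would perform a twist $(u,v)=(w+\alpha\,t, t)$ with $\alpha\neq0$ and then the vertical blow-up, or directly blow up in the appropriate direction, to resolve the singularity. The expected outcome is that the origin of $U_2$ is a degenerate point decomposed into exactly two hyperbolic sectors, with the line at infinity $v=0$ forming the two separatrices; by the symmetry $(-1)^{n-1}=(-1)^{2}=1$ of the charts $V_i$ versus $U_i$ for $n=3$, the origin of $V_2$ has the same local structure (two hyperbolic sectors), consistent with the two points being diametrically opposed on $\sss^1$.

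Finally, for the second assertion I would invoke Lemma \ref{boundary} with $m=2$ and $f(x,y,z)=x^2-z$: the trace of the surface on the sphere at infinity is $\{\,y_4^2 f(x/y_4,y/y_4,z/y_4)=0,\ y_4=0\,\}=\{x^2-y_4 z=0,\ y_4=0\}=\{x=0,\ y_4=0\}$, which is precisely the great circle through the endpoints of the $y$-axis in $\sss^2$; this curve passes through the two infinite equilibria at the origins of $U_2$ and $V_2$ found above, yielding Figure \ref{F1}(b). The main obstacle I anticipate is the blow-up analysis at the origin of $U_2$: one must choose the twist parameter correctly so that $x=0$ is no longer a characteristic direction after the change of variables, carefully track which of the blown-up equilibria are hyperbolic and determine the sectors they generate, and then glue the pieces back to confirm exactly two hyperbolic sectors rather than, say, an elliptic–hyperbolic combination. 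Everything else is a routine substitution.
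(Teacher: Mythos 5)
Your identification of the two infinite equilibria (the origins of $U_2$ and $V_2$, diametrically opposed) and your use of Lemma \ref{boundary} to get the trace $\{x=0,\,y_4=0\}$ of the surface at infinity are fine, and the chart-$U_1$ check agrees with the paper. But the core of the lemma --- that each of these degenerate equilibria consists of exactly two hyperbolic sectors --- is not proved in your proposal: you state it as the ``expected outcome'' of a blow-up you do not carry out, and the preparatory claims you make for that blow-up are wrong. In the chart $U_2$ the compactified system is
\[
\dot u=\tfrac12\bigl(2u^4+v^2+uv^2-2ru^2v^2-2cuv^3\bigr),\qquad
\dot v=-v\bigl(-u^3-v^2+ruv^2+cv^3\bigr),
\]
so the lowest-order terms at the origin are $\dot u=\tfrac{v^2}{2}+\cdots$ and $\dot v=v^3+\cdots$, coming from the terms $y/2$ in $P$ and $-y$ in $Q$, not from $-x^3$ as you claim (the $-x^3$ term only contributes the degree-$4$ terms $u^4$ and $u^3v$). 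Consequently $R_2(u,v)=v^3/2$, the only characteristic direction is $v=0$, and $u=0$ is \emph{not} a characteristic direction; your plan to start with a twist $(u,v)=(w+\alpha t,t)$ is based on the opposite (incorrect) assertion, and your guessed leading behaviour $\dot u=u^3+\cdots$, $\dot v=u^2v+\cdots$ would even give $R\equiv0$, a different degeneracy altogether.

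What the paper actually needs here is a two-stage desingularization: a first vertical blow-up $(u,v)=(u_1,u_1v_1)$ (no twist), cancellation of the common factor $u_1$, after which the new origin is again linearly zero and \emph{now} $u_1=0$ is a characteristic direction, so one performs the twist $(u_1,v_1)=(v_2-u_2,v_2)$ followed by a second vertical blow-up and a cancellation of $u_3^2$; the resulting system has exactly two equilibria $(0,0)$ and $(0,1)$ on the exceptional line, both hyperbolic saddles, and only after blowing down through both changes (keeping track of the signs of $\dot u_2|_{u_2=v_2}$ and $\dot u|_{u=0}$) does one obtain the two hyperbolic sectors at the origin of $U_2$. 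Without this analysis the sector structure is unsupported --- as you yourself note, an elliptic--hyperbolic configuration cannot be excluded a priori --- so the proposal as written has a genuine gap precisely at the step the lemma is about. The remaining points (the $(-1)^{n-1}=1$ symmetry giving the same picture at the origin of $V_2$, and the great circle $x=0$ at infinity) are correct.
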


\begin{proof}
We study the infinite equilibrium points using the Poincaré compactification. From Subsubsection \ref{s211} system \eqref{main_eq_c1_1_2} in the local charts $U_1$ and $U_2$ becomes
\begin{equation}\label{main_eq_c1_1_U1_2}
\dot{u}=\dfrac{1}{2}(-2+2rv^2-uv^2-u^2v^2+2cv^3), \quad\dot{v}=\dfrac{1}{2}(1- u)v^3,
\end{equation}
and
\begin{equation}\label{main_eq_c1_1_U2_2}
\dot{u}=\dfrac{1}{2}(2u^4 + v^2 + uv^2-2ru^2v^2-2cuv^3),  \quad\dot{v}= -v(-u^3-v^2+ruv^2+cv^3),
\end{equation}
respectively. Since in the local chart $U_1$ system \eqref{main_eq_c1_1_2} has no infinite equilibrium points, it is enough to study if the origin of the local chart $U_2$ is an infinite equilibrium point, and it is a linearly zero equilibrium point. So in order to determine its local phase portrait we must use blow ups.
	
From \eqref{RR} we obtain for the differential system \eqref{main_eq_c1_1_U2_2} that $R_2(u, v) = v^3/2$. So $u = 0$ is not a characteristic direction. Hence we do the vertical blow-up $(u, v) = (u_1, u_1v_1)$ obtaining the system
\begin{equation}\label{main_eq_c1_1_U2_blow1_resc_trans_blow2_2c1}
\dot{u_1} =-\frac12 u_1^2 (-2 u_1^2 - v_1^2 - u_1 v_1^2 + 2 r u_1^2 v_1^2 + 2 c u_1^2 v_1^3), \quad \dot{v_1}=\dfrac12 u_1v^3_1(-1+u_1).
\end{equation}
Doing a rescaling of the time we eliminate the common factor $u_1$ between $\dot{u_1}$ and $\dot{v_1}$ and we obtain the system
\begin{equation}\label{main_eq_c1_1_U2_blow1_resc_trans_blow2_resc2_2c1}
\dot{u_1} =-\frac12 u_1 (-2 u_1^2 - v_1^2 - u_1 v_1^2 + 2 r u_1^2 v_1^2 + 2 c u_1^2 v_1^3), \quad \dot{v_1}=\dfrac12 v^3_1(-1+u_1).
\end{equation}
Then the unique equilibrium point of system \eqref{main_eq_c1_1_U2_blow1_resc_trans_blow2_resc2_2c1} on the straight line $u_1 = 0$ is the $(0, 0)$. We analyze its local phase portrait doing blow-ups. For system \eqref{main_eq_c1_1_U2_blow1_resc_trans_blow2_resc2_2c1} $R_2(u_1, v_1)= u_1v_1(v^2_1/2+u_1^2)$, so $u_1 = 0$ is a characteristic direction. Consequently before doing a vertical blow-up we translate the direction $u_1 = 0$ to the direction $v_1-u_1=0$ doing the change of variables $(u_1, v_1)=(v_2-u_2,v_2)$. In the new variables $(u_2,v_2)$ system \eqref{main_eq_c1_1_U2_blow1_resc_trans_blow2_resc2_2c1} becomes
\begin{equation}\label{main_eq_c1_1_U2_blow1_resc_rot_2c10}
\begin{array}{rl}
\dot{u_2}=& \dfrac12 \Big(2 u_2^3 - 6 u_2^2 v_2 + 7 u_2 v_2^2 - u_2^2 v_2^2 - 2 r u_2^3 v_2^2 - 4 v_2^3 + u_2 v_2^3 + 6 r u_2^2 v_2^3\\
& - 2 c u_2^3 v_2^3 - 6 r u_2 v_2^4 + 6 c u_2^2 v_2^4 + 2 r v_2^5 - 6 c u_2 v_2^5 + 2 c v_2^6\Big), \\
\dot{v_2}=& \dfrac12 v_2^3 (-1 - u_2 + v_2).
\end{array}
\end{equation}
Now we do the vertical blow-up $(u_2, v_2) = (u_3, u_3v_3)$ obtaining the system
\begin{equation}\label{main_eq_c1_1_u2_blow1_resc_trans_blow2_2c10}
\begin{array}{rl}
\dot{u_3}=& \dfrac12 u_3^3 \Big(2 - 6 v_3 + 7 v_3^2 - u_3 v_3^2 - 2 r u_3^2 v_3^2 - 4 v_3^3 + u_3 v_3^3 + 6 r u_3^2 v_3^3 - 2 c u_3^3 v_3^3\\
& - 6 r u_3^2 v_3^4 + 6 c u_3^3 v_3^4 + 2 r u_3^2 v_3^5 - 6 c u_3^3 v_3^5 + 2 c u_3^3 v_3^6\Big) ,\\
\dot{v_3}=&  -u_3^2 \Big(-1 + v_3) v_3 (-1 + 2 v_3 - 2 v_3^2 + r u_3^2 v_3^2 - 2 r u_3^2 v_3^3 + c u_3^3 v_3^3 + r u_3^2 v_3^4 \\
&- 2 c u_3^3 v_3^4 + c u_3^3 v_3^5\Big).
\end{array}
\end{equation}
Doing a rescaling of the time we eliminate the common factor $u^2_3$ between $\dot{u}_3$ and $\dot{v}_3$ and we obtain the system
\begin{equation}\label{main_eq_c1_1_u2_blow1_resc_trans_blow2_resc2_2c10}
\begin{array}{rl}
\dot{u_3}=& \dfrac12 u_3 \Big(2 - 6 v_3 + 7 v_3^2 - u_3 v_3^2 - 2 r u_3^2 v_3^2 - 4 v_3^3 + u_3 v_3^3 + 6 r u_3^2 v_3^3 - 2 c u_3^3 v_3^3\\
& - 6 r u_3^2 v_3^4 + 6 c u_3^3 v_3^4 + 2 r u_3^2 v_3^5 - 6 c u_3^3 v_3^5 + 2 c u_3^3 v_3^6\Big) ,\\
\dot{v_3}=&  -(-1 + v_3) v_3 \Big(-1 + 2 v_3 - 2 v_3^2 + r u_3^2 v_3^2 - 2 r u_3^2 v_3^3 + c u_3^3 v_3^3 + r u_3^2 v_3^4 \\
&- 2 c u_3^3 v_3^4 + c u_3^3 v_3^5\Big).
\end{array}
\end{equation}

\begin{figure}[h]
\begin{overpic}[scale=0.5]{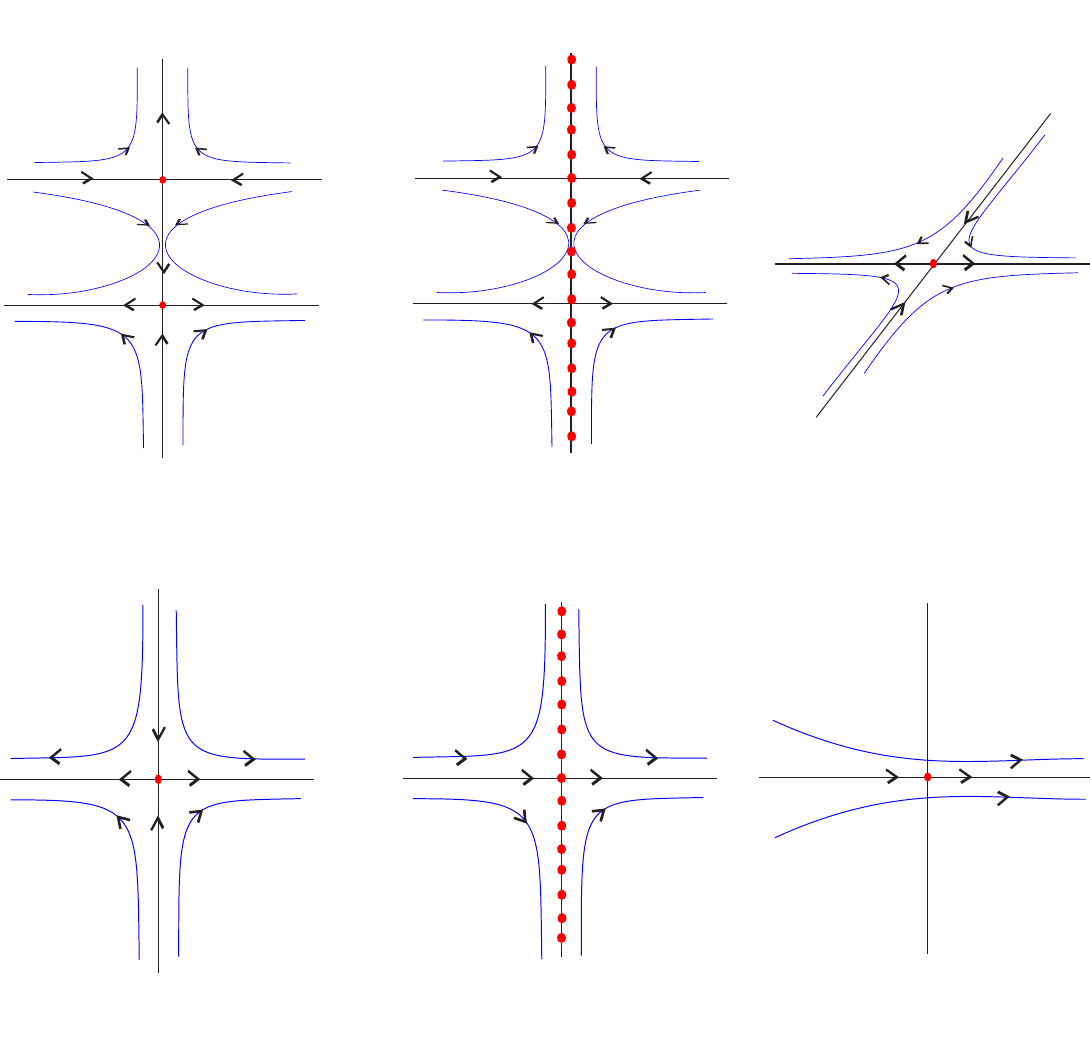}
        \put(101,24){$u$}
        \put(94,88){$v_2=u_2$}
        \put(66,24){$u_1$}
          \put(13.5,43.5){$v_1$}
        \put(67.5,68){$u_3$}
        \put(67.5,79){\tiny{$v_3=1$}}
        \put(50,92.5){$v_3$}
        \put(101,72){$u_2$}
        \put(84.5,43.5){$v$}
        \put(30,24){$u_1$}
        \put(51,43.5){$v_1$}
        \put(30,68){$u_3$}
        \put(30,79){\tiny{$v_3=1$}}
        \put(14,92.5){$v_3$}
          \put(50,50){$(b)$}
        \put(83.5,50){$(c)$}
        \put(12.5,50){$(a)$}
        \put(49.5,2){$(e)$}
        \put(83,2){$(f)$}
        \put(12,2){$(d)$}
		\end{overpic}
\caption{\footnotesize{The blow up of the origin of the local chart $U_2$.}}
\label{F4}
\end{figure}

The two equilibrium points of system
\eqref{main_eq_c1_1_u2_blow1_resc_trans_blow2_resc2_2c10} are the $(0, 0)$ and the $(0,1)$, both are hyperbolic saddles because the eigenvalues of the Jacobian matrix of the system at these equilibria are $-1$ and $1$, and $-1/2$ and $1$, respectively. See Figure \ref{F4}(a) for the phase portrait in a neighborhood of the straight line $u_3=0$. Hence the phase portrait in a neighborhood of the straight line $u_3=0$ for the differential system \eqref{main_eq_c1_1_u2_blow1_resc_trans_blow2_2c10} is shown in Figure \ref{F4}(b).
	
Going back through the blow up $(u_2, v_2) = (u_3, u_3v_3)$ and taking into account that $\dot u_2|_{u_2=v_2}=-v_2^3/2$, we obtain the local phase portrait at the equillibrium $(0,0)$ of the differential system \eqref{main_eq_c1_1_U2_blow1_resc_rot_2c10}, see Figure \ref{F4}(c). Undoing the twist $(u_1, v_1)=(u_2,v_2-u_2)$ we get the local phase portrait at the equillibrium $(0,0)$ of the differential system \eqref{main_eq_c1_1_U2_blow1_resc_trans_blow2_resc2_2c1}, see Figure \ref{F4}(d). Consequently, the local phase portrait at the equillibrium $(0,0)$ of the differential system \eqref{main_eq_c1_1_U2_blow1_resc_trans_blow2_2c1}, see Figure \ref{F4}(e). Finally, going back through the blow up $(u, v) = (u_1, u_1v_1)$ and taking into account that $\dot u|_{u=0}=v^2/2$, the local phase portrait at the origin of the local chart $U_2$ is shown in Figure \ref{F4}(f).
\end{proof}

\begin{figure}[h]
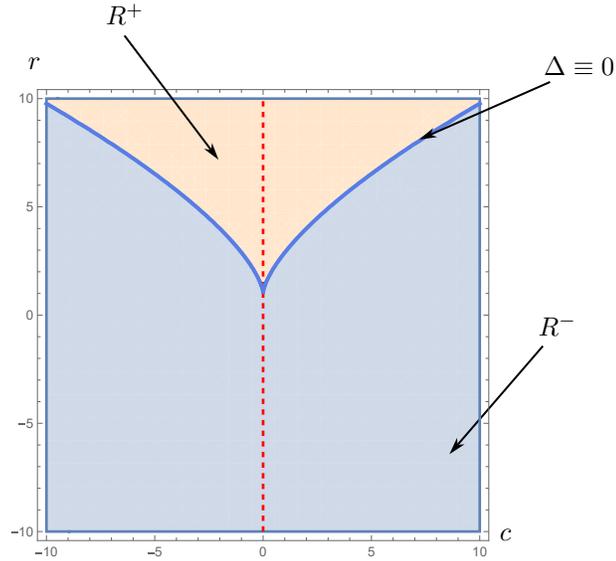

\begin{overpic}[scale=0.5]{discriminant_c_r.pdf}
\put(17,96){$R^+$}
\put(95,40){$R^-$}
\put(96,87){$\Delta\equiv0$}
\put(3,88){$r$}
\put(88,3){$c$}
\end{overpic}
\caption{Parameter plane $(c,r)$. The discriminant $\Delta\equiv0$ is the continuous curve given in the figure, that divides this plane into the regions $R^+=\{(c,r):\Delta(c,r)>0\}$ and $R^-=\{(c,r):\Delta(c,r)<0\}.$ The dashed line represents the straight line $c=0$, which was already studied in \cite{CZ,LMS}.}
\label{fig_box}
\end{figure}

In what follows we will focus on studying the finite equilibrium points of system \eqref{main_eq_c1_1_2}. Recall that these equilibrium points  are on the diagonal $D=\{(x,y):x=y\}$ and satisfy the equation $$x^3+(1-r)x-c=0.$$ The discriminant of that polynomial is $\Delta(c,r)=-27c^2+4(r-1)^3,$ see Figure \ref{fig_box}. Using this discriminant we divide the study of the finite equilibria into three lemmas.

\begin{lemma}\label{L2}
Consider the region $R^+=\{(c,r):\Delta(c,r)>0\}$. Then system \eqref{main_eq_c1_1_2}$\Big|_{R^+}$ has three hyperbolic equilibrium points on the diagonal $D=\{(x,y):x=y\}$, one of them is a saddle and the other two are foci or nodes. More precisely,
\begin{itemize}
\item[(a)] if $1+8r-24x^2<0,$ then the equilibrium point $(x,x)$ is an attracting focus;
\item[(b)] if $1+8r-24x^2\in[0,9),$ then the equilibrium point $(x,x)$ is  an attracting node; and
\item[(c)] if $1+8r-24x^2>9,$ then the equilibrium point $(x,x)$ is a saddle.
\end{itemize}
See Figure \ref{F2} with $\Delta>0$.
\end{lemma}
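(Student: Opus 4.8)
The plan is to locate the finite equilibria of \eqref{main_eq_c1_1_2} explicitly and then read off their type from the trace and determinant of the Jacobian matrix. First I would observe that $\dot x=0$ forces $y=x$, and then $\dot y=0$ reads $g(x):=x^{3}+(1-r)x-c=0$. Since $(c,r)\in R^{+}$ the discriminant $\Delta(c,r)=-27c^{2}+4(r-1)^{3}$ is positive, so $g$ has three simple real roots $x_{1}<x_{2}<x_{3}$ (and, incidentally, $r>1$); these give the three equilibria $(x_{i},x_{i})$ on the diagonal $D$.

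Next I would compute the Jacobian of \eqref{main_eq_c1_1_2} at a point $(x,x)$,
\[
J(x)=\begin{pmatrix} -\tfrac12 & \tfrac12\\ r-3x^{2} & -1\end{pmatrix},
\]
whose trace equals $-\tfrac32<0$ and whose determinant equals $\tfrac12\,(3x^{2}+1-r)=\tfrac12\,g'(x)$. Hence the sign of $\det J(x_{i})$ is the sign of $g'(x_{i})$, and since $g$ has positive leading coefficient and three simple roots one has $g'(x_{1})>0$, $g'(x_{2})<0$, $g'(x_{3})>0$. Therefore $\det J(x_{2})<0$, making $(x_{2},x_{2})$ a hyperbolic saddle, whereas $\det J(x_{1})>0$ and $\det J(x_{3})>0$: at those two points the eigenvalues have positive product and sum $-\tfrac32$, hence negative real parts, so $(x_{1},x_{1})$ and $(x_{3},x_{3})$ are hyperbolic attractors.

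To distinguish a focus from a node among the attractors I would examine the discriminant of the characteristic polynomial $\lambda^{2}+\tfrac32\lambda+\tfrac12(3x^{2}+1-r)$, which equals $\tfrac94-2(3x^{2}+1-r)=\tfrac14\,(1+8r-24x^{2})$. The trichotomy then follows immediately. If $1+8r-24x^{2}<0$ the eigenvalues are non-real with real part $-\tfrac34<0$, so $(x,x)$ is an attracting focus (this sign also forces $\det J(x)>0$, so it can occur only at $x_{1}$ or $x_{3}$), giving (a). If $1+8r-24x^{2}\ge 0$ the eigenvalues are real, and from $\det J(x)=\tfrac12(3x^{2}+1-r)$ one checks the key identities $1+8r-24x^{2}=9\iff\det J(x)=0$ and $1+8r-24x^{2}<9\iff\det J(x)>0$; thus the range $[0,9)$ is precisely "real eigenvalues with $\det J(x)>0$", where both eigenvalues are negative, i.e. an attracting node (the endpoint $1+8r-24x^{2}=0$ being the limiting improper node), giving (b). Finally $1+8r-24x^{2}>9$ means $\det J(x)<0$, so the eigenvalues are real of opposite sign and $(x,x)$ is a saddle — which, by the sign pattern of $g'$, is exactly the middle equilibrium $(x_{2},x_{2})$ — giving (c).

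Everything here is elementary linear algebra once the equilibria are located, so there is no serious obstacle; the only point requiring a little care is the sign bookkeeping that links the quantity $1+8r-24x^{2}$ to $\det J(x)$ — in particular the coincidence that the threshold value $9$ is attained exactly on the determinant-zero locus — which is what converts the trace–determinant data into the stated saddle/node/focus classification. The phase portrait in Figure \ref{F2} with $\Delta>0$ is then assembled from these three local pictures together with the infinite dynamics described in Lemma \ref{L1}.
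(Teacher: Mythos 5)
Your proof is correct and follows essentially the same route as the paper: both locate the equilibria on the diagonal as the roots of $x^{3}+(1-r)x-c=0$, compute the Jacobian $JF(x,x)$ with trace $-3/2$, and classify via the eigenvalues $\lambda_{\pm}=\tfrac14\bigl(-3\pm\sqrt{1+8r-24x^{2}}\bigr)$ (your trace--determinant bookkeeping is just this in different clothing). Your extra observations --- that simplicity of the roots when $\Delta>0$ gives $\det JF\neq0$ (hence hyperbolicity) and that the sign pattern of $g'$ pins the saddle at the middle root --- are correct refinements the paper leaves implicit.
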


\begin{proof}
Note that in the statement of this lemma $1+8r-24x^2$ can not be $9$, otherwise $\Delta(c,r)=0$.
	
Suppose that $p=(x,x)$ is an equilibrium point of \eqref{main_eq_c1_1_2}$\Big|_{R^+}$. Let $F$ be the vector field associated to system \eqref{main_eq_c1_1_2}.  The Jacobian matrix of system \eqref{main_eq_c1_1_2} at $p$ is
$$
JF(p)=\left(\begin{matrix}
-\dfrac{1}{2} & \dfrac{1}{2}\vspace{0.2cm}\\
r-3x^2 & -1
\end{matrix}\right).
$$
Moreover, their eigenvalues are given by $\lambda_{\pm}=\dfrac{1}{4}(-3\pm\sqrt{1+8r-24x^2})$ and since Tr$(JF(p))=-3/2\neq0,$ then system \eqref{main_eq_c1_1_2} has no center. So the statements of the lemma follows immediately from the Hartman-Grobman Theorem, see Theorem 2.15 of \cite{DLA} for the version of this theorem for the planar differential systems.
\end{proof}

\begin{lemma}\label{L3}
Consider the curve $\Delta=\{(c,r):\Delta(c,r)=0\}$. Then system \eqref{main_eq_c1_1_2} has one equilibrium point of multiplicity one and a second equilibrium point of multiplicity two on the diagonal $D=\{(x,y):x=y\}$, the first is a hyperbolic focus or a node, and the second is a semi-hyperbolic saddle-node. More precisely,
\begin{itemize}
\item[(a)] if $|c|>1/(8\sqrt{2})$, then the equilibrium point is an attracting focus; and
\item[(b)] if $|c|\le 1/(8\sqrt{2})$, then the equilibrium point is an attracting node.
\end{itemize}
See Figure \ref{F2} with $\Delta=0$.
\end{lemma}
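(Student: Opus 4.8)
The plan is to exploit that on the curve $\Delta\equiv 0$ the cubic $g(x)=x^{3}+(1-r)x-c$---whose real roots give the $x$-coordinates of the finite equilibria of system \eqref{main_eq_c1_1_2} on the diagonal $D$---has a repeated root. First I would locate the two equilibria explicitly. Imposing $g(x_{0})=0$ and $g'(x_{0})=0$ yields $r=3x_{0}^{2}+1$ and $c=-2x_{0}^{3}$; since in this section $c\neq 0$ (the line $c=0$ was already treated in \cite{CZ,LMS}), we have $x_{0}\neq 0$, so $g$ is not a perfect cube and hence $g(x)=(x-x_{0})^{2}(x-x_{1})$ with $x_{1}$ genuinely simple. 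Comparing the coefficient of $x^{2}$ forces $x_{1}=-2x_{0}$. Thus system \eqref{main_eq_c1_1_2} restricted to $\Delta\equiv 0$ has exactly the double equilibrium $(x_{0},x_{0})$ and the simple equilibrium $(x_{1},x_{1})=(-2x_{0},-2x_{0})$ on $D$.

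Next I would determine the nature of the double equilibrium. From the Jacobian matrix $JF$ written in the proof of Lemma \ref{L2}, at $(x_{0},x_{0})$ one gets $\operatorname{Tr}JF=-3/2$ and $\det JF=\tfrac{1}{2}(3x_{0}^{2}+1-r)=\tfrac{1}{2}g'(x_{0})=0$, so the equilibrium is semi-hyperbolic with eigenvalues $0$ and $-3/2$. Translating it to the origin and passing to the eigenbasis $\{(1,1),(1,-2)\}$, i.e. setting $(x,y)=(x_{0}+u+v,\ x_{0}+u-2v)$, system \eqref{main_eq_c1_1_2} takes the form $\dot u=-x_{0}(u+v)^{2}-\tfrac{1}{3}(u+v)^{3}$, $\dot v=-\tfrac{3}{2}v+x_{0}(u+v)^{2}+\tfrac{1}{3}(u+v)^{3}$. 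The center manifold is $v=\tfrac{2x_{0}}{3}u^{2}+O(u^{3})$, and substituting gives $\dot u=-x_{0}u^{2}+O(u^{3})$ on it. Since $x_{0}\neq 0$ the quadratic coefficient does not vanish, and as the exponent $2$ is even, the classification of semi-hyperbolic singular points (Theorem 2.19 of \cite{DLA}) shows that $(x_{0},x_{0})$ is a saddle-node.

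Then I would analyse the simple equilibrium $(x_{1},x_{1})=(-2x_{0},-2x_{0})$. Here $\det JF=\tfrac{1}{2}g'(x_{1})=\tfrac{1}{2}(x_{1}-x_{0})^{2}=\tfrac{9}{2}x_{0}^{2}>0$ and $\operatorname{Tr}JF=-3/2<0$, so by the Hartman-Grobman Theorem it is topologically an attracting focus or node, and never a center since the trace is nonzero. As in Lemma \ref{L2}, the alternative is decided by the sign of $1+8r-24x_{1}^{2}$; substituting $r=3x_{0}^{2}+1$ and $x_{1}^{2}=4x_{0}^{2}$ this equals $9-72x_{0}^{2}$. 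Finally, using $c=-2x_{0}^{3}$ we obtain $9-72x_{0}^{2}<0\iff x_{0}^{2}>1/8\iff |c|=2|x_{0}|^{3}>1/(8\sqrt{2})$, giving the attracting focus of item (a), while the complementary nonstrict inequality $|c|\le 1/(8\sqrt{2})$ gives $9-72x_{0}^{2}\ge 0$ and hence the attracting node of item (b), the boundary value $|c|=1/(8\sqrt{2})$ producing a one-tangent node. One then checks that the resulting planar phase portrait is the one of Figure \ref{F2} for $\Delta=0$.

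I do not expect a serious obstacle. The only mildly technical point is the center manifold reduction behind the saddle-node claim, and this is essentially automatic: the double root of $g$ cannot be a triple root, since that would force $x_{0}=0$, i.e. $c=0$, which is excluded; hence the quadratic coefficient $-x_{0}$ of the reduced one-dimensional equation is nonzero and the reduction is routine. The remaining bookkeeping---translating $1+8r-24x_{1}^{2}<0$ (respectively $\ge 0$) into the stated inequalities on $|c|$ through $x_{1}=-2x_{0}$ and $c=-2x_{0}^{3}$---is elementary.
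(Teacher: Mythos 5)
Your proposal is correct and follows essentially the same route as the paper: explicit location of the double and simple equilibria on the diagonal, the Jacobian/eigenvalue analysis with Hartman--Grobman for the simple one, and the semi-hyperbolic classification for the double one, with your parametrization by the double root $x_0$ (via $r=3x_0^2+1$, $c=-2x_0^3$) equivalent to the paper's explicit formulas in terms of $c$. The only substantive difference is that you carry out the center manifold reduction explicitly to get $\dot u=-x_0u^2+O(u^3)$, whereas the paper obtains the saddle-node by directly invoking Theorem 2.19 of \cite{DLA}; your extra computation is consistent with, and if anything more complete than, the paper's argument.
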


\begin{proof}
Since  $\Delta=0, $ then $r=1+3(c/2)^{2/3}.$ Thus, the  equilibrium points of \eqref{main_eq_c1_1_2} are given by $p_1=\left(-(c/2)^{1/3},-(c/2)^{1/3}\right)$  and $p_2=\left((4c)^{1/3},(4c)^{1/3}\right)$. Let $F$ be the vector field associated to system \eqref{main_eq_c1_1_2}.  The Jacobian matrices of system \eqref{main_eq_c1_1_2} at $p_1$ and $p_2$ are
$$
JF(p_1)=\left(\begin{matrix}
-\dfrac{1}{2} & \dfrac{1}{2}\vspace{0.2cm}\\
1 & -1
\end{matrix}\right) \quad \text{and}\quad JF(p_2)=\left(\begin{matrix}
-\dfrac{1}{2} & \dfrac{1}{2}\vspace{0.2cm}\\
1-9\left(\dfrac{c}{2}\right)^{2/3} & -1
\end{matrix}\right),
$$
respectively. Moreover, their eigenvalues are $-3/2$ and $0$, and $(3/4)\Big(-1\pm\sqrt{1-(2^7c^2)^{1/3}}\Big)$, respectively. Since Tr$(JF(p_2))=-3/2\neq 0,$ then system \eqref{main_eq_c1_1_2} has no centers and by the Hartman-Grobman Theorem it follows statements (a) and (b). From \cite[Theorem 2.19]{DLA}, we can conclude that $p_1$ is a semi-hyperbolic saddle. See Figure \ref{F2}.
\end{proof}

\begin{lemma}\label{L4}
Consider the region $R^-=\{(c,r):\Delta(c,r)<0\}$. Then system \eqref{main_eq_c1_1_2}$\Big|_{R^-}$ has a unique equilibrium point with multiplicity one on the diagonal $D=\{(x,y):x=y\}$, which is a focus or node. Moreover,
\begin{itemize}
\item[(a)] If $1+8r-24(x^*)^2<0,$ then the equilibrium point $(x^*,x^*)$ is an attracting focus.
\item[(b)] If $1+8r-24(x^*)^2\in[0,9),$ then the equilibrium point $(x^*,x^*)$ is an attracting node.
\end{itemize}
Here
$$
x^*=-\frac{\sqrt[3]{2} (r-1)}{\sqrt[3]{\sqrt{729 c^2+4 (3-3 r)^3}-27 c}}-\frac{\sqrt[3]{\sqrt{729 c^2+4 (3-3 r)^3}-27
   c}}{3 \sqrt[3]{2}},
$$
with $729 c^2 + 4 (3 - 3 r)^3\ge 0$. See Figure \ref{F2} with $\Delta<0$.
\end{lemma}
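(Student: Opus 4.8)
The plan is to mimic the strategy already used for Lemmas \ref{L2} and \ref{L3}: locate the equilibria from the cubic, compute the Jacobian, and read off the local behaviour from its trace, determinant and eigenvalues via the Hartman--Grobman Theorem. First I would note that in $R^-$ the discriminant $\Delta(c,r)=-27c^2+4(r-1)^3$ is negative, so the cubic $g(x)=x^3+(1-r)x-c$ has exactly one real root and it is simple; Cardano's formula applied to $g$ produces precisely the expression for $x^*$ displayed in the statement, and the radicand $729c^2+4(3-3r)^3=729c^2-108(r-1)^3$ is strictly positive exactly because $\Delta(c,r)<0$. Hence system \eqref{main_eq_c1_1_2}$\Big|_{R^-}$ has the single finite equilibrium $p^*=(x^*,x^*)$ on the diagonal $D$, of multiplicity one.

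Next I would compute the Jacobian of \eqref{main_eq_c1_1_2} at $p^*$,
$$
JF(p^*)=\left(\begin{matrix} -\dfrac12 & \dfrac12 \vspace{0.2cm}\\ r-3(x^*)^2 & -1 \end{matrix}\right),
$$
so that $\mathrm{Tr}(JF(p^*))=-3/2$, $\det(JF(p^*))=\tfrac12\bigl(1-r+3(x^*)^2\bigr)$, and the eigenvalues are $\lambda_\pm=\tfrac14\bigl(-3\pm\sqrt{1+8r-24(x^*)^2}\bigr)$, exactly as in Lemma \ref{L2}. Since the trace is $-3/2\neq 0$, there is no center, and by the Hartman--Grobman Theorem (Theorem 2.15 of \cite{DLA}) the local phase portrait is governed by $JF(p^*)$: a focus if $1+8r-24(x^*)^2<0$ and a node if $1+8r-24(x^*)^2\ge 0$, and in either case attracting because the trace is negative once the determinant is known to be positive.

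The crux — and the one place where the single-equilibrium region behaves differently from the three-equilibria case of Lemma \ref{L2} — is to show that $\det(JF(p^*))>0$ (so $p^*$ is never a saddle) and that $1+8r-24(x^*)^2<9$ (so case (c) of Lemma \ref{L2} cannot occur here). Using the identity $1+8r-24(x^*)^2=9-8\bigl(1-r+3(x^*)^2\bigr)$, both claims follow from the single inequality $3(x^*)^2\ge r-1$. For $r\le 1$ this is trivial; for $r>1$ I would use that $g$ has a local maximum at $-\sqrt{(r-1)/3}$ and a local minimum at $+\sqrt{(r-1)/3}$, and that $\Delta(c,r)<0$ forces the unique real root $x^*$ of $g$ to lie outside the closed interval $\bigl[-\sqrt{(r-1)/3},\sqrt{(r-1)/3}\bigr]$ (inside it $g$ would have three real roots counted with multiplicity, i.e. $\Delta\ge 0$), whence $(x^*)^2>(r-1)/3$. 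Therefore $\det(JF(p^*))=\tfrac12(1-r+3(x^*)^2)>0$ and $1+8r-24(x^*)^2<9$, and together with $\mathrm{Tr}(JF(p^*))<0$ this gives an attracting focus when $1+8r-24(x^*)^2<0$ and an attracting node when $1+8r-24(x^*)^2\in[0,9)$, which are statements (a) and (b); the phase portrait for $\Delta<0$ in Figure \ref{F2} then results from combining this with the infinite equilibria described in Lemma \ref{L1}. I expect the root-location argument establishing $3(x^*)^2\ge r-1$ to be the main obstacle; the rest is a routine repetition of the computations in Lemmas \ref{L2} and \ref{L3}.
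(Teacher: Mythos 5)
Your proposal is correct and follows essentially the same route as the paper: locate the unique simple root $x^*$ of $x^3+(1-r)x-c$ in $R^-$, compute the Jacobian on the diagonal, and conclude via the eigenvalues $\lambda_\pm=\tfrac14\bigl(-3\pm\sqrt{1+8r-24(x^*)^2}\bigr)$ and the Hartman--Grobman Theorem, exactly as in Lemmas \ref{L2} and \ref{L3}. The one place where you go beyond the paper is the root-location argument (that $\Delta<0$ forces $x^*$ outside the interval between the critical points of the cubic, hence $3(x^*)^2>r-1$ and so $1+8r-24(x^*)^2<9$), which supplies the justification the paper compresses into the final sentence ``it is easy to show that \dots it is not possible that $1+8r-24(x^*)^2\ge 9$''; your argument for that step is sound.
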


\begin{proof}
It is easy to see that $p=(x^*,x^*)$ is the unique equilibrium point of system \eqref{main_eq_c1_1_2}$\Big|_{R^-}$. Let $F$ be the vector field associated to system \eqref{main_eq_c1_1_2}.  The Jacobian matrix of system \eqref{main_eq_c1_1_2} at $p$ is
$$
JF(p)=\left(\begin{matrix}
-\dfrac{1}{2} & \dfrac{1}{2} \vspace{0.2cm}\\
r-3(x^*)^2 & -1
\end{matrix}\right).
$$
Moreover, their eigenvalues are given by $\lambda_{\pm}=\dfrac{1}{4}(-3\pm\sqrt{1+8r-24(x^*)^2})$ and since Tr$(JF(p))=-3/2\neq0,$ then system \eqref{main_eq_c1_1_2} has no center.

Statements $(a)$ and $(b)$ follow immediately from the expression of the eigenvalues $\lambda_\pm$ and the Hartman-Grobman Theorem, see Figure \ref{F2}.

It is easy to show that when $\Delta(c,r)<0$ and  $729 c^2 + 4 (3 - 3 r)^3\ge 0$, it is not possible that $1+8r-24(x^*)^2\ge 9$.
\end{proof}

\begin{lemma}\label{L5}
System \eqref{main_eq_c1_1_2} has no limit cycles.
\end{lemma}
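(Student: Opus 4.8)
The plan is to apply the Bendixson--Dulac criterion recalled in Subsection \ref{sec:ben_crit} to the planar system \eqref{main_eq_c1_1_2} on the whole plane $\R^2$, which is simply connected. Writing $P(x,y)=\tfrac12(y-x)$ and $Q(x,y)=rx-x^3-y+c$, the divergence is
\[
\frac{\p P}{\p x}+\frac{\p Q}{\p y}=-\tfrac12-1=-\tfrac32,
\]
which is a nonzero constant, hence of constant sign on all of $\R^2$. By the Bendixson criterion, system \eqref{main_eq_c1_1_2} has no closed orbits, and in particular no limit cycles, in $\R^2$. Since system \eqref{main_eq_c1_1_2} is exactly the restriction of the Ehrhard--M\"uller system \eqref{main_eq_1} with $s=1/2$ to the invariant algebraic surface $x^2-z=0$, this proves the lemma.

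There is essentially no obstacle here: because the vector field \eqref{main_eq_c1_1_2} has constant divergence $-3/2$, the Dulac function can be taken to be $f\equiv 1$ and no clever weight is needed. One only has to note that the domain is the full plane, so the simple-connectedness hypothesis of the criterion is automatically satisfied, and that a limit cycle would in particular be a closed orbit, so its nonexistence follows at once. This same constant-divergence observation is already implicit in the computation $\mathrm{Tr}(JF(p))=-3/2$ appearing in Lemmas \ref{L2}--\ref{L4}.
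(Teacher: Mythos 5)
Your proposal is correct and coincides with the paper's own argument: the divergence of the vector field of \eqref{main_eq_c1_1_2} is the constant $-3/2$, so Bendixson's criterion on the simply connected plane excludes closed orbits, hence limit cycles. No further comment is needed.
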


\begin{proof}
Let $F$ be the vector field associated to system \eqref{main_eq_c1_1_2}. Notice that the divergence of $F$ is $-3/2$, thus from \textit{Bendison's criterion} (see Section \ref{sec:ben_crit}) the result follows.
\end{proof}

From Lemmas \ref{L1}, \ref{L2}, \ref{L3}, \ref{L4} and $\ref{L5}$ follows statement (i) of Theorem \ref{ta}.

Let $\gamma(t)=(x(t),y(t),z(t))$ be an orbit of the differential system \eqref{main_eq_1} in the Poincaré ball outside the invariant surface $x^2-z=0$. Then the Darboux invariant $(x^2-z)e^t$ evaluated on this orbit is a non-zero constant. The interval of definition of an orbit of a differential system defined in a compact space is $(-\infty,\infty)$, see for instance the first chapter of \cite{DLA}. So the $\alpha$-limit and the $\omega$-limit of the orbit $\gamma(t)$ are defined. 

For the orbit $\gamma(t)$ and for all $t\in \R$ we have that $(x(t)^2-z(t))e^t=k$, where $k$ is a non-zero constant. Since $e^t\to 0$ when $t\to -\infty$ it follows that $x(t)^2-z(t) \to \pm \infty$ when $t\to -\infty$. Therefore, from Figure \ref{F1} we get that the $\alpha$-limit of the orbit $\gamma(t)$ contained in the interior of the Poincaré ball and in the region $x^2-z<0$ is the infinite equilibrium point at the origin of the local chart $U_3$, while the $\alpha$-limit of $\gamma(t)$ contained in the interior of the Poincaré ball and in the region $x^2-z>0$ is the infinite equilibrium point at the origin of the local chart $V_3$. This completes the proof of statement (ii) of Theorem \ref{ta}.

As before let $\gamma(t)=(x(t),y(t),z(t))$ be an orbit of the differential system \eqref{main_eq_1} in the Poincaré ball outside the invariant surface $x^2-z=0$. For this orbit and for all $t\in \R$ we have that $(x(t)^2-z(t))e^t=k$, where $k$ is a non-zero constant. Since $e^t\to \infty$ when $t\to \infty$ it follows that $x(t)^2-z(t) \to 0$ when $t\to \infty$. Hence, from Figures \ref{F2} and \ref{F1} we obtain that the $\omega$-limit of the orbit $\gamma(t)$ is either one of the stable finite equilibrium points contained in the surface $x^2-z=0$, or one of the two infinite equilibrium points at the endpoints of the $y$-axis contained in the surface $x^2-z=0$. So statement (iii) of Theorem \ref{ta} is proved.

\section{Proof of Theorem $\ref{tb}$}\label{stb}

We will analyze the dynamics of the Ehrhard-Müller systems having the invariant algebraic surface $y^2+z^2-cx=0$ in the Poincaré ball.

In this context we study the dynamics of the differential system \eqref{main_eq_1} restricted to the invariant algebraic surface $y^2+z^2-cx=0,$ which is given by
\begin{equation}\label{main_eq_c2}
\dot{y}=-\dfrac{z}{c}(y^2+z^2)-y+c,\quad
\dot{z}=\dfrac{y}{c}(y^2+z^2)-z.
\end{equation}
where $c\in\mathbb{R}\setminus\{0\}$. 

We begin by studying the nature of the points at the infinity of system \eqref{main_eq_c2}, for that we state the following result.
\begin{figure}[h]
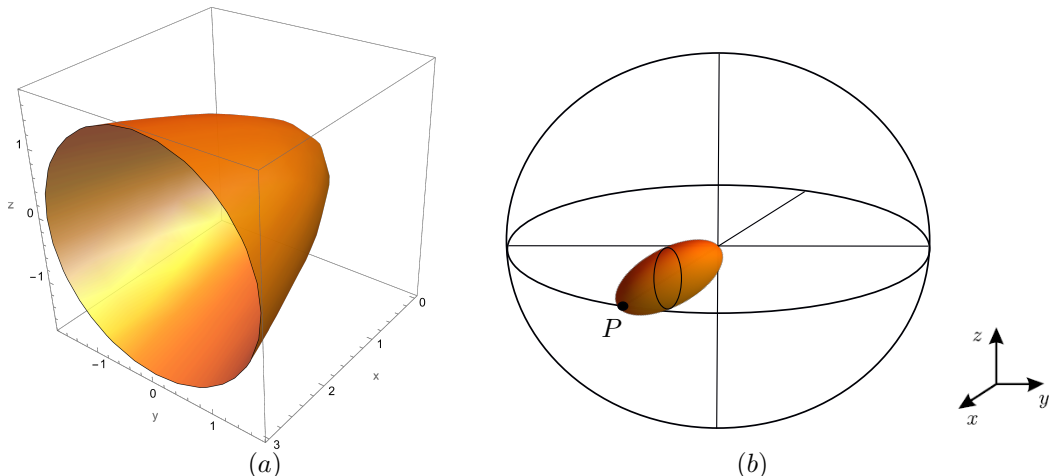

\begin{overpic}[scale=0.5]{surface_3.pdf}
        \put(57,11){$P$}
             \put(23,-2){$(a)$}
        \put(70,-2){$(b)$}
		\end{overpic}
\caption{\footnotesize{(a) The invariant algebraic surface $y^2+z^2-cx=0$. (b) The interception of the invariant algebraic surface $y^2+z^2-cx=0$ with the sphere $\mathbb{S}^2$ of the infinity. The equilibrium point $P$ at infinity is localized at the endpoint of the positive $x-$axis.}}
\label{F33}
\end{figure}
\begin{lemma}\label{compac_Rp_1}
The differential system \eqref{main_eq_c2} has no infinity equilibria. Furthermore, the intersection of the invariant surface $y^2+z^2-cx=0$ with the sphere of the infinity $\mathbb{S}^2$ is shown in Figure \ref{F33}. 
\end{lemma}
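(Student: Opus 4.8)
The plan is to apply the Poincar\'e compactification of planar systems (Subsubsection \ref{s211}) to the cubic system \eqref{main_eq_c2} --- observe that it has degree $n=3$ precisely because $c\neq 0$ --- and to test every local chart for equilibria on the circle at infinity $\sss^1$. Since the charts $(U_1,\varphi_1)$ and $(V_1,\psi_1)$ together with the origins of the charts $(U_2,\varphi_2)$ and $(V_2,\psi_2)$ cover all of $\sss^1$, it is enough to inspect those.

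Write $P(y,z)=-\dfrac{z}{c}(y^2+z^2)-y+c$ and $Q(y,z)=\dfrac{y}{c}(y^2+z^2)-z$ for the right-hand sides of \eqref{main_eq_c2}. Substituting $y=1/v$, $z=u/v$ in the compactification formulas of Subsubsection \ref{s211} and clearing denominators, a direct computation gives in the chart $U_1$
\begin{equation*}
\dot u=\frac{(1+u^2)^2}{c}-c\,u\,v^3 ,
\end{equation*}
so that on the line at infinity $v=0$ we have $\dot u=(1+u^2)^2/c\neq 0$ for every $u\in\R$; hence $U_1$, and likewise $V_1$ (for $n=3$ the compactified field there differs only by the factor $(-1)^{n-1}=1$), contains no infinite equilibrium. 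Substituting instead $y=u/v$, $z=1/v$ gives in the chart $U_2$
\begin{equation*}
\dot u=-\frac{(1+u^2)^2}{c}+c\,v^3 ,
\end{equation*}
so at the origin $(u,v)=(0,0)$ one has $\dot u=-1/c\neq 0$; therefore the origins of $U_2$ and $V_2$ are not equilibria either. This establishes that \eqref{main_eq_c2} has no equilibria at infinity.

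For the description of the intersection of the surface with $\sss^2$ I would apply Lemma \ref{boundary} with $m=2$ and $f(x,y,z)=y^2+z^2-cx$: the trace of the surface on the boundary of the Poincar\'e ball is $\{x_2^2+x_3^2-c\,x_1y_4=0,\ y_4=0\}$, that is, $x_2=x_3=0$ on $\sss^2$, which is the pair of endpoints of the $x$-axis. On the surface itself one has $cx=y^2+z^2\geq 0$, so only the endpoint lying in the closed half-space $cx\geq 0$ is attained as a limit of points of the surface; this is the point $P$ in Figure \ref{F33}(b), the endpoint of the positive $x$-axis (the origin of the chart $U_1$ of the Poincar\'e ball of $\R^3$) when $c>0$, in agreement with statement (ii) of Theorem \ref{tb}.

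Everything above is elementary, so the main care goes into the bookkeeping of the compactification formulas and of which part of $\sss^1$ (respectively $\sss^2$) each chart covers. The one point worth flagging, although it lies slightly outside the statement, is that the absence of equilibria of \eqref{main_eq_c2} at infinity is not in conflict with $P$ being an infinite equilibrium in Theorem \ref{tb}(ii): the parametrization of the paraboloid $y^2+z^2=cx$ by $(y,z)$ collapses the whole circle $\sss^1$ at infinity of the $(y,z)$-plane onto the single point $P$ of $\sss^2$, so the rotational behaviour of \eqref{main_eq_c2} near $\sss^1$ is exactly what one expects around $P$.
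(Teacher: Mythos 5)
Your proposal is correct and follows essentially the same route as the paper: compute the Poincar\'e compactification of the cubic system \eqref{main_eq_c2} in the charts $U_1$ and $U_2$ (your expressions agree with \eqref{main_eq_c2_U1}--\eqref{main_eq_c2_U2}) and observe that $\dot u=\pm(1+u^2)^2/c\neq 0$ on $v=0$, so there are no infinite equilibria. The only difference is that you additionally make explicit, via Lemma \ref{boundary} and the sign condition $cx=y^2+z^2\ge 0$, why the closure of the paraboloid meets $\sss^2$ only at the single point $P$, a step the paper's proof leaves to Figure \ref{F33}; this is a welcome clarification rather than a departure in method.
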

	
\begin{proof}
To obtain information about the infinite equilibria we use the Poincaré compactification. From Section \ref{poincare_comp} system \eqref{main_eq_c2} in the local charts $U_1$ and $U_2$ becomes
\begin{equation*}\label{main_eq_c2_U1}
\dot{u}=\dfrac{1}{c}((1+u^2)^2 - c^2 u v^3), \quad\dot{v}=-\dfrac{v}{c}(-u - u^3 - c v^2 + c^2 v^3),
\end{equation*}
and
\begin{equation*}\label{main_eq_c2_U2}
\dot{u}=\dfrac{1}{c}(-(1+u^2)^2+ c^2 v^3),  \quad\dot{v}= \dfrac{v}{c}(-u - u^3 + c v^2),
\end{equation*}
respectively. Since the equilibrium points at infinity (if they exist) are on $v=0,$ we obtain that $\dot{u}=\dfrac{1}{c}(1+u^2)^2\neq0$ (resp. $\dot{u}=-\dfrac{1}{c}(1+u^2)^2\neq0$) and $\dot{v}=0$ in the chart $U_1$ (resp. $U_2$). Consequently, in the local charts $U_1$ and $U_2$ system \eqref{main_eq_c2} has no infinite equilibrium points.
\end{proof}

Now, we analyze the finite equilibrium points of system~\eqref{main_eq_c2}. We shall study the equilibria lying in the set
$
B = \big\{(y, z) \in \mathbb{R}^2 \mid y^2 + z^2 - c y = 0,\ y \neq 0 \big\},$
which describes a circle centered at $(c/2, 0)$ with radius $c/2$ in the $yz$-plane, excluding the points $(0, 0)$ and $(c, 0)$.

\begin{lemma}\label{aux:l3}
System \eqref{main_eq_c1_1_2}$\Big|_{B}$ has a unique equilibrium point with multiplicity one, which is a stable focus.
\end{lemma}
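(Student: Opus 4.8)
The plan is to compute the equilibria of the restricted system \eqref{main_eq_c2} directly, show there is exactly one and that it is simple, and then classify it from its linear part.

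First I would impose $\dot y=\dot z=0$ in \eqref{main_eq_c2}. Setting $\rho=y^2+z^2$, these equations read $y\rho=cz$ and $z\rho=c(c-y)$. If $z=0$ the first one forces $y=0$, but then $\dot y=c\neq 0$; hence every equilibrium has $z\neq 0$, and dividing the two relations gives $z^2=y(c-y)$, that is $y^2+z^2=cy$ --- exactly the equation defining $B$, so that ``restricted to $B$'' indeed captures all equilibria of \eqref{main_eq_c2}. Substituting $\rho=cy$ into $y\rho=cz$ yields $z=y^2$ (using $c\neq 0$), and then $\rho=cy$ becomes $y^3+y-c=0$ after dividing by $y$ (note that $y=0$ would force $c=0$, which is excluded). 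The cubic $h(y)=y^3+y-c$ has $h'(y)=3y^2+1>0$, hence is strictly increasing and has a unique real root $y^\ast$, which is simple; therefore \eqref{main_eq_c2} has the single equilibrium $q=\bigl(y^\ast,(y^\ast)^2\bigr)$, with $(y^\ast)^3+y^\ast=c$, and it has multiplicity one.

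Next I would linearize \eqref{main_eq_c2} at $q$. Using the relations $z=(y^\ast)^2$ and $c=y^\ast\bigl((y^\ast)^2+1\bigr)$ to clear denominators, a direct computation of the Jacobian matrix $JF(q)$ gives $\operatorname{tr}JF(q)=-2$ and $\det JF(q)=1+3(y^\ast)^2$. In particular $\det JF(q)>0$, which confirms that $q$ is of multiplicity one, and $\bigl(\operatorname{tr}JF(q)\bigr)^2-4\det JF(q)=-12(y^\ast)^2<0$, where one uses $y^\ast\neq 0$ (valid since $c\neq 0$). Hence the eigenvalues are $\lambda_\pm=-1\pm i\sqrt{3}\,|y^\ast|$, a complex conjugate pair with negative real part, so by the Hartman--Grobman Theorem (Theorem 2.15 of \cite{DLA}) $q$ is a stable focus. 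This completes the proof.

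I do not expect a real obstacle, since the argument reduces to a finite computation. The only place that needs a little care is the determinant: one must substitute $z=(y^\ast)^2$ before expanding and then recognize the factor $3(y^\ast)^4\bigl((y^\ast)^2+1\bigr)^2$ that cancels $c^2$, leaving the clean value $1+3(y^\ast)^2$. The conceptual content is the observation that $\dot y=\dot z=0$ automatically forces $y^2+z^2=cy$, together with the monotonicity of $h$, which makes uniqueness immediate.
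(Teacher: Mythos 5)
Your proposal is correct, and its classification step coincides with the paper's: compute the Jacobian of \eqref{main_eq_c2}, observe the trace is $-2$ and the eigenvalues are a complex pair $-1\pm i\sqrt{3}\,|y^\ast|$ (equivalently, the paper's $-1\pm i\tfrac{\sqrt{3}}{c}(y^2+z^2)$ evaluated on $y^2+z^2=cy$), and conclude a stable focus via hyperbolicity. The genuine added value of your write-up is the explicit existence and uniqueness argument: you show that $\dot y=\dot z=0$ forces $y^2+z^2=cy$, $z=y^2$ and $y^3+y-c=0$, and the strict monotonicity of $y\mapsto y^3+y$ gives a unique simple root; the paper's proof takes the location and uniqueness of the equilibrium for granted and only records the linearization. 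So your argument is a more self-contained version of the same route, and both computations agree (in particular your $\det JG(q)=1+3(y^\ast)^2$ matches the paper's $1+3(y^2+z^2)^2/c^2$ on $B$).
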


\begin{proof}
Let $G$ be the vector field associated to system \eqref{main_eq_c2}.  The Jacobian matrix of system \eqref{main_eq_c2} at $p$ is
$$
JG(p)=\left(\begin{matrix}
-\dfrac{2yz}{c}-1 & -\dfrac{y^2+3z^2}{c}\\
\dfrac{z^2+3y^2}{c} & \dfrac{2yz}{c}-1
\end{matrix}\right).
$$
Moreover, their eigenvalues are given by $\lambda_{\pm}=-1\pm i\dfrac{\sqrt{3}}{c}(y^2+z^2)$ and since Tr$(JG(p))=-2\neq0,$ then system \eqref{main_eq_c2} has no center. 

The lemma directly follow from the expression of the eigenvalues \( \lambda_\pm \) and the Hartman-Grobman Theorem.		
\end{proof}
	
\begin{lemma}\label{PC_c}
The system described by equation \eqref{main_eq_c2} does not have periodic orbits.
\end{lemma}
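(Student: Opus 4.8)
The plan is to argue exactly as in the proof of Lemma~\ref{L5}, by invoking Bendixson's criterion recalled in Subsection~\ref{sec:ben_crit}. Let $G=(G_1,G_2)$ be the vector field associated to system~\eqref{main_eq_c2}, that is,
\[
G_1(y,z)=-\frac{z}{c}(y^2+z^2)-y+c,\qquad G_2(y,z)=\frac{y}{c}(y^2+z^2)-z.
\]
Since $c\neq 0$, the map $G$ is a polynomial — hence $C^1$ — vector field defined on all of $\R^2$, and $\R^2$ is a simply-connected domain, so the hypotheses of Bendixson's criterion are available without any restriction.

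Next I would compute the divergence of $G$. One finds
\[
\frac{\p G_1}{\p y}+\frac{\p G_2}{\p z}=\left(-\frac{2yz}{c}-1\right)+\left(\frac{2yz}{c}-1\right)=-2 .
\]
In fact this is already visible from the proof of Lemma~\ref{aux:l3}: the Jacobian matrix $JG(p)$ displayed there has trace $-2$ at every point $p$. Thus $\operatorname{div}G\equiv-2$ has constant (negative) sign on $\R^2$, with no exceptional set at all.

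Applying Bendixson's criterion with $D=\R^2$ then yields that system~\eqref{main_eq_c2} has no closed orbits contained in $\R^2$, and in particular no periodic orbits, which is the assertion of the lemma. I do not expect any genuine obstacle in this argument: the only things that must be checked are that the phase space is the whole plane (so simple connectedness holds) and that $c\neq 0$ makes $G$ well defined and $C^1$ there; once the divergence is seen to be the nonzero constant $-2$, the conclusion is immediate.
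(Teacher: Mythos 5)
Your proposal is correct and follows essentially the same route as the paper: compute that the divergence of the vector field of system \eqref{main_eq_c2} is the constant $-2$ and apply Bendixson's criterion on the simply-connected domain $\R^2$. Your additional checks (that $c\neq0$ makes the field polynomial and $C^1$ on all of $\R^2$) are harmless and only make the application of the criterion more explicit.
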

	
\begin{proof}
Let \( F \) denote the vector field corresponding to the system in equation \eqref{main_eq_c2}. It is important to note that the divergence of \( F \) is equal to \(-2\). Therefore, by applying \textit{Bendixson's criterion} (see Section \ref{sec:ben_crit}), we conclude that the system has no limit cycles.
\end{proof}

From Lemmas \ref{compac_Rp_1}, \ref{aux:l3} and $\ref{PC_c}$ follows statement (i) of Theorem \ref{tb}.

Let \( \gamma(t) = (x(t), y(t), z(t)) \) be an orbit of the differential system \eqref{main_eq_1} in the Poincaré ball outside the invariant surface \( y^2 + z^2 - c x = 0 \). Then, the Darboux invariant \( (y^2 + z^2 - c x)e^{2t} \) evaluated on this orbit is a non-zero constant. The interval of definition of an orbit of a differential system defined in a compact space is \( (-\infty, \infty) \), see for instance the first chapter of \cite{DLA}. Therefore, the \( \alpha \)-limit and the \( \omega \)-limit of the orbit \( \gamma(t) \) are well-defined.

For the orbit \( \gamma(t) \) and for all \( t \in \mathbb{R} \), we have that \( (y(t)^2 + z(t)^2 - c x(t))e^{2t} = k \), where \( k \) is a non-zero constant. Since \( e^{2t} \to 0 \) as \( t \to -\infty \), it follows that \( y(t)^2 + z(t)^2 - c x(t) \to \pm \infty \) as \( t \to -\infty \). Hence, from Figure \ref{F33}, we obtain that the \( \alpha \)-limit of the orbit \( \gamma(t) \), contained in the interior of the Poincaré ball and in the region \( y^2 + z^2 - c x \neq 0 \), is the infinity equilibrium point \( P \) at the origin of the chart \( U_1 \), which is located at the endpoint of the positive \( x \)-axis. This completes the proof of statement (ii) of Theorem \ref{tb}.

As before, let \( \gamma(t) = (x(t), y(t), z(t)) \) be an orbit of the differential system \eqref{main_eq_1} in the Poincaré ball outside the invariant surface \( y^2 + z^2 - c x = 0 \). For this orbit and for all \( t \in \mathbb{R} \), we have that \( (y(t)^2 + z(t)^2 - c x(t))e^{2t} = k \), where \( k \) is a non-zero constant. Since \( e^{2t} \to \infty \) as \( t \to \infty \), it follows that \( y(t)^2 + z(t)^2 - c x(t) \to 0 \) as \( t \to \infty \). Hence, from Figures \ref{fig_surface_2} and \ref{F33}, we obtain that the \( \omega \)-limit of the orbit \( \gamma(t) \) is the stable focus $Q$ contained in the surface $y^2+z^2-cx=0$. Thus, statement (iii) of Theorem \ref{tb} is proved.

\section*{Acknowledgements}

Jaume Llibre is partially supported by the Agencia Estatal de Investigaci\'on of Spain grant PID2022-136613NB-100, AGAUR (Generalitat de Catalunya) grant 2021SGR00113, and by the Reial Acad\`emia de Ci\`encies i Arts de Barcelona. Gabriel Rondón is partially supported by  the Agencia Estatal de Investigaci\'on of Spain grant PID2022-136613NB-100 and by São Paulo Research Foundation (FAPESP) grants 2020-06708-9 and 2022-12123-9.

\section*{Author Declarations}

\textit{Conflict of Interest}: The authors declare that they have no known competing financial interests or personal relationships that could have appeared to influence the work reported in this paper.

\textit{Data Availability}: All data generated or analyzed during this study are included in this article.

\textit{Ethics Approval}: Not applicable.

\textit{Author Contributions}: 
\begin{itemize}
    \item Jaume Llibre: Conceptualization, Data curation, Formal analysis, Funding acquisition, Investigation, Methodology, Project administration, Resources, Software, Supervision, Validation, Visualization, Writing--original draft, Writing--review \& editing.
    \item Gabriel Rondón: Conceptualization, Data curation, Formal analysis, Funding acquisition, Investigation, Methodology, Project administration, Resources, Software, Supervision, Validation, Visualization, Writing--original draft, Writing--review \& editing.
\end{itemize}


\begin{thebibliography}{88}

\bibitem{ALGM} A. A. Andronov, E. A. Leontovich, I. I. Gordon and A. G. Ma\"ier.
Qualitative theory of second-order dynamic systems. Halsted Press [John Wiley \& Sons], New York-Toronto; Israel Program for Scientific Translations, Jerusalem-London, 1973.

\bibitem{Be} I. Bendixson.
Sur les courbes définies par des équations différentielles. Acta Math. {\bf 24} (1901), 1--88.

\bibitem{CZ} J. Cao and X. Zhang.
Dynamics of the Lorenz system having an invariant algebraic surface. J. Math. Physics {\bf 48}(8) (2007), 082702, 8 pp.

\bibitem{CL} A. Cima and J. Llibre.
Bounded polynomial vector fields. Trans. Amer. Math. Soc.  {\bf 318}(2) (1990), 557--579.

\bibitem{Du} H. Dulac.
Récherches des cycles limites. C.R. Acad. Sci. Paris Sér. I Math., {\bf 204} (1937), 1703--1706.

\bibitem{DLA} F. Dumortier, J. Llibre, and J. C. Artés.
Qualitative theory of planar differential systems. Universitext. Springer-Verlag, Berlin, 2006.

\bibitem{EM} P. Ehrhard and U. M\"uller.
Dynamical behavior of natural convection in a single-phase loop. J. Fluid Mech. {\bf 217} (1990), 487--518.

\bibitem{GWR1} M. Gorman, P. Widmann and K. Robbins.
Chaotic flow regimes in a convection loop. Phys. Rev. Lett. {\bf 52} (1984), 2241--2244.

\bibitem{GWR2} M. Gorman, P. Widmann and K. Robbins.
Nonlinear dynamics of a convection loop: A quantitative comparison of experiment with theory. Physica D {\bf 19} (1986), 255--267.

\bibitem{LMS}  J. Llibre, M. Messias and P. Silva.
Global dynamics of the Lorenz system with invariant algebraic surfaces. Int. J. Bifurcation and Chaos {\bf 20}, (2010), 3137--3155.

\bibitem{LO} J. Llibre and R. D. S. Oliveira.
Quadratic systems with invariant straight lines of total multiplicity two having Darboux invariants. Comm. Contemporary Math. {\bf 17}(3) (2015), 1450018.

\bibitem{PLB} J. Park, H. Lee and J.-J. Baik.
Periodic and chaotic dynamics of the Ehrhard–M\"uller system. Int. J. Bifurcation and Chaos {\bf 26}(6) (2016), 1630015.

\bibitem{We} P. Welander.
On the oscillatory instability of a differentially heated loop. J. Fluid Mech. {\bf 29} (1976), 17--30.

\bibitem{WGR} P. Widmann, M. Gorman and K. Robbins.
Chaos in laminar and turbulent flows. Physica D {\bf 36} (1989), 157--166.

\end{thebibliography}
\end{document}